\documentclass[11pt]{amsart}
\usepackage{amsmath}
\usepackage{amssymb, latexsym, amsfonts, enumitem, tikz, fancyhdr, bm, mathrsfs}
\usepackage{todonotes}
\usepackage{xcolor}
\usepackage[normalem]{ulem} 
\usepackage{fullpage}

\newtheorem{thm}{Theorem}[section]
\newtheorem{prop}[thm]{Proposition}
\newtheorem{lem}[thm]{Lemma}
\newtheorem{defn}[thm]{Definition}

\newtheorem{rem}[thm]{Remark}
\newtheorem{cor}[thm]{Corollary}

\newtheorem*{theorem*}{Claim}

\usepackage{comment}

\usepackage{enumitem}

\begin{document}

\title[Unique opeartor space structure]{A two-dimensional subspace of $\ell_\infty^3$ has a unique operator space structure if and only if it is isometric to $\ell_\infty^2$}
\author[Gupta]{Rajeev Gupta}
\author[Mal]{Arpita Mal}
\author[Misra]{Gadadhar Misra}
\author[Ray]{Samya Kumar Ray}

\address[Gupta]{School of Mathematics and Computer Science, Indian Institute of Technology Goa, Goa - 403401}
\email[Gupta]{\text rajeev@iitgoa.ac.in}

\address[Mal]{Dhirubhai Ambani University, Gandhinagar, India}
\email[Mal]{\text arpita\_mal@dau.ac.in}

\address[Misra]{Indian Statistical Institute, Bangalore, India and Indian Institute of Technology, Gandhinagar, India}
\email[Misra]{\text gm@isibang.ac.in}

\address[Ray]{The Institute of Mathematical Sciences, 4th Cross Street, CIT Campus, Tharamani, Chennai, Tamil Nadu 600113, India, and Homi Bhabha National Institute, Training School Complex, Anushakti Nagar, Mumbai 400094, India} 
\email[Ray]{\text samya@imsc.res.in}
\begin{abstract}
We show that a two-dimensional complex subspace of $\ell_\infty^3$ carries a
unique operator space structure if and only if it is isometric to
$\ell_\infty^2$; the ``if'' half is the And\^o's theorem.  For each of the remaining
spaces we exhibit an explicit non-normal $Y \in \mathcal{M}_2$, with $(I,Y)$ an extreme
point of the associated matrix ball, for which the induced linear map is
contractive but not completely contractive.  Since every two-dimensional
subspace of $\ell_\infty^3$ has the $2$-summing property, the $2$-summing
property does not imply uniqueness of the operator space structure for two dimensional complex Banach spaces.
\end{abstract}
\maketitle

\section{Introduction}
This paper is a companion to \cite{GMMR}, where we studied the $2$-summing property — equivalently, Property P — for the normed linear spaces $X_A=(\mathbb C^2,\|\cdot\|_A)$, $\|(z_1,z_2)\|_A:=\|z_1A_1+z_2A_2\|$, attached to a pair $A=(A_1,A_2)$ of $d\times d$ complex matrices. For $d=2$, $X_A$ has the $2$-summing property if and only if $X_A$ is isometrically isomorphic to $\ell_\infty^2$ \cite{MPV}. For $d=3$ it persists in the diagonal case: Proposition 4.4 of \cite{AFJS95} shows that  $X_A$ has the $2$-summing property for diagonal $A_1,A_2\in\mathcal M_3$, and in \cite{GMMR} we showed that such an $X_A$ has Property Q introduced in \cite{BM}, which implies Property P, also introduced in \cite{BM}. What remained open there was the value of $\alpha(X_A)$.

In this article, we answer the above question. More specifically, we show that $\alpha(X_A)>1$ for every pair of diagonal $A_1,A_2\in\mathcal M_3$ unless $X_A$ is isometric to $\ell_\infty^2$.
Since $\alpha(X)=\|\mathrm{id}:\mathrm{MIN}(X)\to\mathrm{MAX}(X)\|_{cb}$, this says that $\ell_\infty^2$ is, up to isometry, the only two-dimensional subspace of $\ell_\infty^3$ carrying a unique operator space structure. In particular, already for two-dimensional spaces, the $2$-summing property does not necessarily imply $\alpha=1$. In dimension three this was known: $\ell_\infty^3$ itself has the $2$-summing property \cite{AFJS95, GMMR}, see also \cite{BM}, while Parrott's example gives $\alpha(\ell_\infty^3)>1$. What is new here is that the implication fails in the smallest possible dimension, and that for two-dimensional subspaces of $\ell_\infty^3,$ we determine exactly when it fails.

Every two-dimensional subspace of $\ell_\infty^3$ that is not isometric to $\ell_\infty^2$ is isometric to $(\mathbb C^2,\|\cdot\|_{r,s})$ for some $0<r,s\leq1$ with $r+s>1$, where $\|\cdot\|_{r,s}$ is the norm induced by the diagonal matrices $\operatorname{diag}(1,0,r)$ and $\operatorname{diag}(0,1,s)$, that is,
\[\|(z_1,z_2)\|_{r,s}=\max\bigl\{|z_1|,\ |z_2|,\ |rz_1+sz_2|\bigr\}.\]
It therefore suffices to treat this two-parameter family. 
For each such pair $(r,s),$ we produce a $2\times2$ matrix $Y$ for which the linear map induced by $(I,Y)$ on the dual space $(\mathbb C^2,\|\cdot\|_{r,s})^*$ is contractive but not completely contractive.

Our route to these examples is through homomorphisms. Let $\mathbb B$ be the open unit ball of a norm on $\mathbb C^m$. It is well known that the linear map $L_V$ determined on the dual space by an $m$-tuple $V=(V_1,\ldots,V_m)$ of operators is contractive, respectively completely contractive, if and only if the Parrott-like homomorphism $\varrho_V$ determined by the same tuple on the polynomial algebra $\mathbb C[z_1,\ldots,z_m]$, equipped with the supremum norm on $\mathbb B$, is contractive, respectively completely contractive; see \cite[Lemma 5.1]{vip}. 
{ Recall that 
\[\varrho_V(f) := \begin{pmatrix} f(w) & Df(w) V \\ 0 & f(w)\end{pmatrix},\,\,w\in \Omega,\,\, f\in H^\infty(\Omega),\]
where $Df(w) V = \frac{\partial f}{\partial z_1}(w) V_1 + \cdots  + \frac{\partial f}{\partial z_m}(w) V_m$.

The von Neumann inequality stating that $\|p(T)\| \leq \|p\|_{\infty, \mathbb {D}}$ if and only if $T$ is a contraction  
follows from the Sz.–Nagy unitary (power) dilation theorem. In two variables, Ando's dilation 
theorem for a pair of commuting contractions provides a generalisation of the von Neumann's 
inequality. For any $n \geq 3$, two questions arise naturally. First, whether $\|p(T_1, \ldots, T_n)\| \leq \|p\|_{\infty, \mathbb{D}^n}$
if and only if $(T_1, \ldots, T_n)$ are a commuting $n$-tuple of contractions. Second, if $(T_1, \ldots, T_n)$ is an $n$-tuple of 
commuting contractions on a Hilbert space $\mathcal{H}$, then whether there exists a commuting tuple of 
unitaries $(U_1, \ldots , U_n)$ acting on a Hilbert space $\mathcal{K} \supset \mathcal{H}$ such that  
\[p(T_1, \ldots , T_n) = P_{\mathcal{H}} p(U_1, \ldots, U_n)_{| \mathcal{H}},\]
where $P_{\mathcal{H}}$ is the orthogonal projection of $\mathcal{K}$ on $\mathcal{H}$.
The well-known example due to Parrott settled this second question in the negative for $n=3$. However, 
the commuting tuple of contractions constructed by Parrott satisfies von Neumann's inequality, so the first question remained unanswered. It was 
Varopoulos and Kaijser who later found three commuting contractions that failed von Neumann's inequality. 
}

We choose to work with $\varrho_V$. Using Arveson's boundary dilation theorem, we show that if $(I,Y)$ is an extreme point of $\mathscr V_{r,s}(2)$ — the set of pairs of $2\times2$ matrices inducing a contractive linear map on $(\mathbb C^2,\|\cdot\|_{r,s})^*$ — then the homomorphism it induces is completely contractive if and only if $Y$ is normal. Here $I$ denotes the identity matrix. It remains to produce such a non-normal $Y$, and we do so for every pair $0<r,s\leq1$ with $r+s>1$. Hence for every two-dimensional subspace of $\ell_\infty^3$ not isometric to $\ell_\infty^2,$ there is a contractive homomorphism of $\mathbb C[z_1,z_2]$, in the supremum norm on the corresponding ball, that is not completely contractive. This, together with And\^o's theorem, proves the assertion of the title.

The same characterization can be obtained from Paulsen's factorization theorem \cite[Proposition 2.2]{VernPaulsen}. We have used the dilation theorem because it keeps the argument inside the category of homomorphisms, where the question originates, and because it produces the commuting normal pair explicitly. The homomorphisms we construct are of Parrott's type, induced by a nilpotent pair, and by the equivalence above this costs nothing as far as the existence of a contractive linear map that is not completely contractive is concerned. Determining which contractive homomorphisms of $\mathbb C[z_1,z_2]$ into $\mathcal M_n$, equipped with the supremum norm on $B_{X_A}$, the open unit ball of $X_A$, fail to be completely contractive, without the restriction to this special form, seems to us to be a harder question and remains open.

We conclude by explaining why a dilation theorem is needed at all. Complete contractivity of $L_{(I,Y)}$ asks that $\|P_1\otimes I+P_2\otimes Y\|\leq 1$ for every admissible pair $(P_1,P_2)$ of matrices. The pair $(A_1,A_2)$ is admissible by the very definition of $\|\cdot\|_A$. It is, however, of no use: $A_1$ and $A_2$ are diagonal, so
\[A_1\otimes I+A_2\otimes Y=\operatorname{diag}\bigl(I,\;Y,\;rI+sY\bigr),\]
and the resulting test, $\max\{1,\|Y\|,\|rI+sY\|\}\leq1$, is precisely the hypothesis that $L_{(I,Y)}$ be contractive. The same degeneracy occurs for every simultaneously unitarily diagonalizable pair, see Remark~\ref{rem:notequal}. Non-diagonal test tuples are therefore unavoidable, and Arveson's dilation theorem is what produces them.

Section~2 contains the preliminaries. Section~3 studies the extreme points needed in the proof. Section~4 gives a criterion for complete contractivity using dilation. Section~5 gives the counterexamples and proves the main result.

The fourth-named author has shown in \cite{RayUnique} that there is an uncountable family of mutually non-isometric two-dimensional complex Banach spaces, each with a unique operator space structure.

\section{Preliminaries}\label{sec:prelim}
Fix an $m$-tuple $A:=(A_1, \ldots, A_m)$ of $d\times d$ linearly independent matrices. Define a norm on $\mathbb{C}^m$ by
\begin{equation}
\|z\|_A := \|z_1 A_1 + \cdots + z_m A_m \|_{\ell^d_2 \to \ell^d_2}.
\end{equation}
If $m=d^2$, then $\|\cdot\|_A$ is isometric to the operator norm in $\mathbb{C}^{d\times d}$. Assume $m< d^2$, so that $(\mathbb{C}^m, \|\cdot\|_A)$ is a subspace $X_A$ of $(\mathbb{C}^{d\times d}, \|\cdot\|_{\ell_2^d \to \ell_2^d})$ via the isometric inclusion $\iota(z_1, \ldots , z_m) = z_1 A_1 + \cdots + z_m A_m$.

If $\ell:X_A \to \mathbb{C}$ is a linear functional, then it has an extension $\hat{\ell}$ to $(\mathbb{C}^{d\times d}, \|\cdot\|_{\ell_2^d \to \ell_2^d})$ with $\|\ell\| = \|\hat{\ell}\|$. Then
\[
\ell(\iota (z_1, \ldots , z_m)) = z_1 \operatorname{tr} (A_1C^*) + \cdots + z_m \operatorname{tr} (A_mC^*)
\]
for some $C\in \mathbb{C}^{d\times d}$. Therefore, the unit ball in the dual space $(\mathbb{C}^m, \|\cdot\|_A)^*$ is
\[
\Big\{\Big( \sum_{i,j=1}^d A_1(i,j) \overline{C(i,j)}, \ldots , \sum_{i,j=1}^d A_m(i,j) \overline{C(i,j)}\Big): \|C\|_{\operatorname{tr}} \leq 1\Big\}.
\]

\subsection{The dual unit ball and contractive maps}\label{sec:dual}
\begin{lem}\label{lem:dual-ball-and-contraction}
Let $X := (\mathbb{C}^2, \|\cdot\|_A)$, where
$\|(z_1, z_2)\|_A = \max\bigl\{|z_1|,\, |z_2|,\, |a_1 z_1 + a_2 z_2|\bigr\}$.
\begin{enumerate}
  \item[(i)] The dual norm on $X^*$ is given by
  \[
    \|(\hat\alpha_1, \hat\alpha_2)\|_{X^*}
    \;=\; \min\Bigl\{|\alpha_1| + |\alpha_2| + |\alpha_3|
      \,:\, \hat\alpha_1 = \alpha_1 + a_1 \alpha_3,\; \hat\alpha_2 = \alpha_2 + a_2 \alpha_3\Bigr\},
  \]
  the minimum being attained, and consequently
  \[
    B_{X^*}
    \;=\; \bigl\{(\alpha_1 + a_1 \alpha_3,\; \alpha_2 + a_2 \alpha_3)
      \,:\, |\alpha_1| + |\alpha_2| + |\alpha_3| \leq 1\bigr\}.
  \]
  \item[(ii)] Let $\mathcal{E}$ be any complex normed linear space. For $V_1, V_2 \in \mathcal{E},$ the linear map
  $V = (V_1, V_2) : X^* \to \mathcal{E}$ defined by
  $V(\hat\alpha_1, \hat\alpha_2) := \hat\alpha_1 V_1 + \hat\alpha_2 V_2$
  satisfies
  \[
    \|V\|_{X^* \to \mathcal{E}} \leq 1
    \;\iff\;
    \max\bigl\{\|V_1\|,\; \|V_2\|,\; \|a_1 V_1 + a_2 V_2\|\bigr\} \leq 1.
  \]
\end{enumerate}
\end{lem}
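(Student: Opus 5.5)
The plan is to realize $X$ as a quotient-dual picture. The map $\iota(z_1,z_2)=(z_1,z_2,a_1z_1+a_2z_2)$ is an isometric embedding of $X$ into $\ell_\infty^3$. By Hahn--Banach every functional on $X$ extends with the same norm to $\ell_\infty^3$, whose dual is $\ell_1^3$. Hence $X^*$ is isometric to the quotient $\ell_1^3/\ker(\iota^*)$, where
\[
\iota^*(\alpha_1,\alpha_2,\alpha_3)=(\alpha_1+a_1\alpha_3,\ \alpha_2+a_2\alpha_3).
\]
Concretely, the functional $(\hat\alpha_1,\hat\alpha_2)$, acting by $z\mapsto \hat\alpha_1z_1+\hat\alpha_2z_2$, equals $\alpha\circ\iota$ for $\alpha\in\ell_1^3$ exactly when $\hat\alpha_j=\alpha_j+a_j\alpha_3$.

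For (i) I will prove two inequalities. First, any such $\alpha$ gives $|\hat\alpha_1z_1+\hat\alpha_2z_2|\le\|\alpha\|_1\|z\|_A$, so the dual norm is at most the infimum. Conversely, Hahn--Banach yields a norm-preserving extension, which is some $\alpha\in\ell_1^3$ with $\|\alpha\|_1=\|(\hat\alpha_1,\hat\alpha_2)\|_{X^*}$; this also shows the minimum is attained. (Alternatively, the fibre $\{\alpha:\iota^*\alpha=\hat\alpha\}$ is a closed affine line in $\mathbb C^3$ and $\|\cdot\|_1$ is coercive on it.) The description of $B_{X^*}$ then follows at once: $\hat\alpha\in B_{X^*}$ iff some representative has $\|\alpha\|_1\le1$.

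For (ii), the key step is that $B_{X^*}$ is the image of the $\ell_1^3$ unit ball under the linear map $\iota^*$. Therefore
\[
\|V\|_{X^*\to\mathcal E}=\sup_{\|\alpha\|_1\le1}\bigl\|\alpha_1V_1+\alpha_2V_2+\alpha_3(a_1V_1+a_2V_2)\bigr\|,
\]
since $V(\iota^*\alpha)=\alpha_1V_1+\alpha_2V_2+\alpha_3(a_1V_1+a_2V_2)$. A linear map on $\ell_1^3$ has norm equal to the maximum norm of its values on the unit vectors $e_1,e_2,e_3$: the upper bound is the triangle inequality, and the lower bound is attained at the $e_i$. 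This gives $\max\{\|V_1\|,\|V_2\|,\|a_1V_1+a_2V_2\|\}$, which proves the equivalence.

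No step is a genuine obstacle. The only care needed is bookkeeping in the identification of $X^*$ with $\mathbb C^2$: the pairing must be bilinear rather than sesquilinear, so that the conjugates in the preliminary trace formula do not enter the statement.
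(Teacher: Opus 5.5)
Your proposal is correct and follows essentially the same route as the paper: the isometric embedding $\iota$ into $\ell_\infty^3$, a Hahn--Banach extension to identify $B_{X^*}$ with the image of $B_{\ell_1^3}$ under $(\alpha_1,\alpha_2,\alpha_3)\mapsto(\alpha_1+a_1\alpha_3,\alpha_2+a_2\alpha_3)$, and the fact that a linear map out of $\ell_1^3$ has norm equal to the largest norm of its values on the basis vectors. The only cosmetic difference is that you read off attainment of the minimum directly from the norm-preserving extension, whereas the paper attributes it to compactness of $B_{\ell_1^3}$.
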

 
\begin{proof}
The norm on $X$ is the $\ell^\infty$ norm pulled back along the isometric embedding
\[
  \iota : X \hookrightarrow \ell^\infty(3),
  \qquad (z_1, z_2) \longmapsto (z_1,\, z_2,\, a_1 z_1 + a_2 z_2).
\]
Every contractive linear functional $\ell : \ell^\infty(3) \to \mathbb{C}$ has the form
\[
  \ell(z_1, z_2, z_3) = \alpha_1 z_1 + \alpha_2 z_2 + \alpha_3 z_3,
  \qquad |\alpha_1| + |\alpha_2| + |\alpha_3| \leq 1,
\]
so the restriction $\hat\ell := \ell \circ \iota : X \to \mathbb{C}$ is given by
\[
  \hat\ell(z_1, z_2) = \hat\alpha_1 z_1 + \hat\alpha_2 z_2,
  \qquad \hat\alpha_i := \alpha_i + a_i \alpha_3 \;\; (i = 1, 2),
\]
and is plainly a contraction. Conversely, any contractive $\hat\ell : X \to \mathbb{C}$
admits a norm-preserving extension to $\ell^\infty(3)$ by the Hahn--Banach theorem.
Therefore $B_{X^*}$ coincides with the image of
$B_{\ell^\infty(3)^*} = B_{\ell^1(3)}$ under
$(\alpha_1, \alpha_2, \alpha_3) \mapsto (\alpha_1 + a_1 \alpha_3,\, \alpha_2 + a_2 \alpha_3)$,
which establishes both formulas in~(i); the infimum is attained because
$B_{\ell^1(3)}$ is compact.
 
For~(ii), using (i),
\[
  V(\hat\alpha_1, \hat\alpha_2)
  \;=\; \hat\alpha_1 V_1 + \hat\alpha_2 V_2
  \;=\; \alpha_1 V_1 + \alpha_2 V_2 + \alpha_3 \bigl(a_1 V_1 + a_2 V_2\bigr),
\]
and by (i) the quotient map $\ell^1(3) \to X^*$ carries $B_{\ell^1(3)}$ onto
$B_{X^*}$. Hence $\|V\|_{X^* \to \mathcal{E}}$ equals the norm of the map
$\ell^1(3) \to \mathcal{E}$ sending the standard basis vectors to
$V_1,\, V_2,\, a_1 V_1 + a_2 V_2$. For any complex normed space $\mathcal{E}$ the norm of a map
out of $\ell^1(3)$ is the maximum of the norms of the images of the basis vectors,
since $\|\sum_i \alpha_i u_i\| \leq \bigl(\sum_i |\alpha_i|\bigr)\max_i \|u_i\|$ with
equality at the basis vectors. This gives the claimed equivalence, and in fact
$\|V\|_{X^* \to \mathcal{E}} = \max\{\|V_1\|, \|V_2\|, \|a_1V_1+a_2V_2\|\}$.
\end{proof}

\begin{lem}\label{lem:contraction}
Let $L_V:(\mathbb{C}^m, \|\cdot\|_A)^* \to (\mathbb{C}^{n\times n}, \|\cdot\|_{\ell_2^n \to \ell_2^n})$ be a linear operator of the form
\[
L_V(z_1, \ldots ,z_m) = z_1 V_1 + \cdots + z_m V_m,\quad V_1, \ldots , V_m \in \mathbb{C}^{n\times n}.
\]
The operator $L_V$ is contractive if and only if $\displaystyle\sup_{\|x\|_2 = 1=\|y\|_2} \|\mathbb{V}_{x,y} \|_{\ell_2^d \to \ell_2^d} \leq 1$, where
\[
\mathbb{V}_{x,y}:= \Big(\!\!\Big( \sum_{k=1}^m A_k(i,j) \langle V_k x , y\rangle \Big)\!\!\Big)_{i,j=1}^d.
\]
Equivalently, $L_V$ is contractive if and only if $(\langle V_1 x, y\rangle, \ldots , \langle V_m x, y\rangle) \in B_{X_A}$ for all euclidean unit vectors $x,y$.
\end{lem}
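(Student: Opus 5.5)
The plan is to compute the operator norm of $L_V$ directly, using the description of the unit ball of the dual space $(\mathbb{C}^m,\|\cdot\|_A)^*$ given at the start of Section~\ref{sec:prelim}. That ball is the image of the trace-class unit ball $\{C:\|C\|_{\operatorname{tr}}\le 1\}$ under the map
\[
C\mapsto \bigl(\operatorname{tr}(A_1C^*),\ldots,\operatorname{tr}(A_mC^*)\bigr).
\]
Hence
\[
\|L_V\|=\sup_{\|C\|_{\operatorname{tr}}\le1}\Big\|\sum_k \operatorname{tr}(A_kC^*)V_k\Big\|,
\]
and the operator norm on the right can be written as $\sup_{x,y}|\langle\cdot\, x,y\rangle|$ over Euclidean unit vectors $x,y\in\mathbb{C}^n$. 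We then interchange the two suprema.

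For fixed unit vectors $x,y$ we get
\[
\Big\langle \sum_k \operatorname{tr}(A_kC^*)V_k x,y\Big\rangle=\sum_k \operatorname{tr}(A_kC^*)\langle V_kx,y\rangle=\operatorname{tr}(\mathbb{V}_{x,y}C^*),
\]
because $\mathbb{V}_{x,y}=\sum_k\langle V_kx,y\rangle A_k$ entrywise. By trace duality, $\sup_{\|C\|_{\operatorname{tr}}\le1}|\operatorname{tr}(\mathbb{V}_{x,y}C^*)|=\|\mathbb{V}_{x,y}\|_{\ell_2^d\to\ell_2^d}$. Taking the supremum over $x,y$ gives the identity
\[
\|L_V\|=\sup_{\|x\|_2=1=\|y\|_2}\|\mathbb{V}_{x,y}\|,
\]
which is stronger than the stated equivalence.

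For the second formulation, note that $\|\mathbb{V}_{x,y}\|=\bigl\|\sum_k\langle V_kx,y\rangle A_k\bigr\|$, and this is by definition $\|(\langle V_1x,y\rangle,\ldots,\langle V_mx,y\rangle)\|_A$. So the condition $\|\mathbb{V}_{x,y}\|\le1$ says exactly that this tuple lies in the closed unit ball of $X_A$; the notation $B_{X_A}$ is read here as the closed ball.

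The only step needing care is the description of the dual ball, that is, surjectivity onto the functionals of norm at most one. This is the Hahn–Banach extension combined with the identification of the dual of $(\mathbb{C}^{d\times d},\|\cdot\|_{\mathrm{op}})$ with the trace class, both already recorded in Section~\ref{sec:prelim}. The remaining manipulations are routine bookkeeping of conjugations in the pairing.
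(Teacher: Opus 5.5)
Your proof is correct and follows the paper's argument. You compute $\|L_V\|$ via the trace-class description of the dual ball, interchange the suprema over $C$ and over unit vectors $x,y$, and use trace duality to get $\|L_V\|=\sup_{x,y}\|\mathbb{V}_{x,y}\|$; the second formulation then follows from $\mathbb{V}_{x,y}=\sum_k\langle V_kx,y\rangle A_k$. Your use of $|\langle\cdot\,x,y\rangle|$ is slightly more careful than the paper's write-up, and so is your reading of $B_{X_A}$ as the closed ball, which the equivalence needs even though the introduction calls $B_{X_A}$ the open unit ball.
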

\begin{proof}
We compute the operator norm of $L_V$:
\begin{align*}
\|L_V\| &= \sup \big\{\|L_V(w) \|: w \in (\mathbb{C}^m, \|\cdot\|_A)^*_1\big\}\\
&=\sup_{\|C\|_{\operatorname{tr}} \leq 1} \Big\| \sum_{k=1}^m V_k \Big(\sum_{i,j=1}^d A_k(i,j) \overline{C(i,j)}\Big)\Big\|\\
&=\sup_{\|C\|_{\operatorname{tr}} \leq 1} \sup_{\|x\|_2=1=\|y\|_2} \sum_{i,j=1}^d \overline{C(i,j)} \Big(\sum_{k=1}^m A_k(i,j) \langle V_k x, y\rangle \Big)\\
&=\sup_{\|x\|_2=1=\|y\|_2} \sup_{\|C\|_{\operatorname{tr}} \leq 1} \operatorname{tr} \big(\mathbb{V}_{x,y}\, C^*\big)\\
&= \sup_{\|x\|_2=1=\|y\|_2} \|\mathbb{V}_{x,y}\|_{\ell_2^d \to \ell_2^d}.
\end{align*}
The equivalent statement follows directly: for every $(\lambda_1, \ldots , \lambda_m) \in (\mathbb{C}^m, \|\cdot\|_A)^*_1$, we have $\|\lambda_1 V_1 + \cdots + \lambda_m V_m\| \leq 1$. So, for any pair of euclidean unit vectors $x,y$, 
\[(\langle V_1 x, y\rangle , \ldots , \langle V_m x, y\rangle) \in B_{X_A},\] and conversely.
\end{proof}

\begin{cor}\label{cor:diagonal}
Let $L_V:(\mathbb{C}^m, \|\cdot\|_A)^* \to (\mathbb{C}^{n\times n}, \|\cdot\|_{\ell_2^n \to \ell_2^n})$ be as above, and assume $A_1, \ldots , A_m$ are $d\times d$ diagonal matrices. Then $L_V$ is contractive if and only if
\[
\max \Big\{\Big\|\sum_{k=1}^m A_k(1,1) V_k \Big\|, \ldots , \Big\| \sum_{k=1}^m A_k(d,d) V_k\Big\| \Big\} \leq 1.
\]
\end{cor}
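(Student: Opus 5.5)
The plan is to deduce the corollary directly from Lemma~\ref{lem:contraction}. When $A_1,\ldots,A_m$ are diagonal, the $d\times d$ matrix $\mathbb V_{x,y}$ is itself diagonal: its $(i,j)$ entry is $\sum_k A_k(i,j)\langle V_kx,y\rangle$, and this vanishes for $i\neq j$. The $(i,i)$ entry equals $\bigl\langle \bigl(\sum_k A_k(i,i)V_k\bigr)x,y\bigr\rangle$.

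The first step is to note that the operator norm of a diagonal matrix is the largest modulus of its diagonal entries. This gives
\[
\|\mathbb V_{x,y}\| = \max_{1\le i\le d}\Bigl|\Bigl\langle \Bigl(\sum_{k=1}^m A_k(i,i)V_k\Bigr)x,y\Bigr\rangle\Bigr|.
\]

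The second step is to take the supremum over unit vectors $x,y$ and exchange the two maxima, which is harmless because both are suprema. For each fixed $i$ we use the standard identity
\[
\sup_{\|x\|_2=\|y\|_2=1}|\langle Tx,y\rangle| = \|T\|.
\]
Combining these, we get
\[
\sup_{x,y}\|\mathbb V_{x,y}\| = \max_i\Bigl\|\sum_{k=1}^m A_k(i,i)V_k\Bigr\|.
\]
By Lemma~\ref{lem:contraction}, this quantity is exactly $\|L_V\|$, so $L_V$ is contractive if and only if the displayed maximum is at most $1$.

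There is no real obstacle; the only points needing care are the exchange of suprema and the identification of the norm of a diagonal matrix. As a consistency check, the case $d=3$ with $A_1=\operatorname{diag}(1,0,a_1)$ and $A_2=\operatorname{diag}(0,1,a_2)$ recovers Lemma~\ref{lem:dual-ball-and-contraction}(ii).
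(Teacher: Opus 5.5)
Your proposal is correct and matches the paper: the corollary is stated there without a separate proof, as an immediate consequence of Lemma~\ref{lem:contraction}. The same computation ($\mathbb{V}_{x,y}$ diagonal with entries $\langle(\sum_k A_k(i,i)V_k)x,y\rangle$, then exchanging suprema to obtain $\max_i\|\sum_k A_k(i,i)V_k\|$) is carried out in Remark~\ref{rem:notequal}.
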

\begin{rem}\label{rem:notequal}
Let $(P_1, \ldots, P_m)$ be an $m$-tuple of $d \times d$ matrices, and let
$\mathbb{P}_V := \sum_{k=1}^m P_k \otimes V_k$, acting on
$\mathbb{C}^d\otimes\mathbb{C}^n\cong \ell_2^{dn}$. Define
\[
(\mathbb{P}_V)_{x,y} := \Big(\!\!\Big( \sum_{k=1}^m P_k(i,j) \langle V_k x, y \rangle \Big)\!\!\Big)_{i,j=1}^d.
\]
Writing $R_x:\mathbb{C}^d\to\mathbb{C}^d\otimes\mathbb{C}^n$, $u\mapsto u\otimes x$, one checks that
$(\mathbb{P}_V)_{x,y}=R_y^*\,\mathbb{P}_V\,R_x$, and $R_x,R_y$ are isometries when
$\|x\|_2=\|y\|_2=1$. Hence
\[
\sup_{\|x\|_2=1=\|y\|_2} \|(\mathbb{P}_V)_{x,y}\|_{\ell_2^d \to \ell_2^d} \leq \|\mathbb{P}_V\|_{\ell_2^{dn} \to \ell_2^{dn}}.
\]
Consider the special case $P_k = A_k$ for $k = 1, \ldots, m$, and write
$\mathbb{A}_V = \sum_{k=1}^m A_k \otimes V_k$.
 
Suppose first that $A_1, \ldots, A_m$ are simultaneously unitarily diagonalizable.
A simultaneous unitary conjugation $A_k\mapsto U^*A_kU$ replaces $\mathbb{A}_V$ by
$(U^*\otimes I)\mathbb{A}_V(U\otimes I)$ and $(\mathbb{A}_V)_{x,y}$ by
$U^*(\mathbb{A}_V)_{x,y}U$, so it changes neither side; we may therefore assume the
$A_k$ are diagonal. Then $\mathbb{A}_V$ is block diagonal with blocks
$\sum_k A_k(i,i)V_k$, while $(\mathbb{A}_V)_{x,y}$ is diagonal with entries
$\big\langle \big(\sum_k A_k(i,i)V_k\big)x,y\big\rangle$. Hence both sides equal
$\max_{1\leq i\leq d}\big\|\sum_k A_k(i,i)V_k\big\|$ and
\begin{equation}\label{eq:diagequal}
\sup_{\|x\|_2=1=\|y\|_2} \|(\mathbb{A}_V)_{x,y}\|_{\ell_2^d \to \ell_2^d} = \|\mathbb{A}_V\|_{\ell_2^{dn} \to \ell_2^{dn}} .
\end{equation}
 
When moreover $m=d=2$, considerably more is true. Writing
$\phi_i(z)=z_1A_1(i,i)+z_2A_2(i,i)$, $i=1,2$, we have
$\|z\|_A=\max\{|\phi_1(z)|,|\phi_2(z)|\}$, and $\phi_1,\phi_2$ are linearly independent
precisely because $A_1,A_2$ are; so $X_A$ is isometric to $\ell_\infty^2$. And\^o's
theorem then gives $\alpha(X_A)=\alpha(X_A^*)=1$: every contractive $L_V$ is completely
contractive, so that $\|\sum_k P_k\otimes V_k\|\leq1$ for \emph{every} admissible test
tuple $P$, not merely for $P=A$. This is strictly stronger than \eqref{eq:diagequal},
and it is the reason one must look outside this case for a counterexample.
 
The tuple $A$ is in any event an admissible test tuple: at the $d$-th matrix level
\[
\Big\|\sum_{j=1}^m A_j\otimes \epsilon_j\Big\|_{\mathcal{M}_d(\operatorname{MIN} (X_A^*))}
=\sup_{z\in B_{X_A}}\Big\|\sum_{j=1}^m z_jA_j\Big\|=\sup_{\|z\|_A\leq 1}\|z\|_A=1,
\]
so $\|\mathbb{A}_V\|>1$ certifies that $L_V$ is not completely contractive.
Consequently, if some tuple $(V_1,\ldots,V_m)$ satisfies
\begin{equation} \label{notequal}
\sup_{\|x\|_2=1=\|y\|_2} \|(\mathbb{A}_V)_{x,y}\|_{\ell_2^d \to \ell_2^d} < \|\mathbb{A}_V\|_{\ell_2^{dn} \to \ell_2^{dn}},
\end{equation}
then, after dividing every $V_k$ by the (necessarily positive) left-hand side --- both
sides of \eqref{notequal} being homogeneous of degree one in $V$ --- we obtain a tuple
for which $L_V$ is contractive, by Lemma~\ref{lem:contraction}, while
$\|A_1 \otimes V_1 + \cdots + A_m \otimes V_m\| > 1$. This would provide an example of
a contractive linear map $L_V$ on $(\mathbb{C}^m, \|\cdot\|_A)^*$ that is not completely
contractive. The computations of \cite{MPV} show that strict inequality in
\eqref{notequal} does hold when $m = d = 2$ and $A_1, A_2$ are not simultaneously
diagonalizable.
 
It should be stressed, however, that \eqref{eq:diagequal} already excludes the tuples
of Subsection 1.2, which are diagonal: for those $A_1,A_2$ the inequality
\eqref{notequal} is never strict, whatever $V$ is chosen, even though
$\alpha(X_A)>1$ by Corollary~\ref{cor:main}. The route through the test tuple $P=A$
therefore cannot produce the counterexample of Section~\ref{sec:counterexample}, and
this is why Section~3 works with general test tuples $P\in\mathcal{M}_k$.
\end{rem}

\subsection{Definition of the subspaces of $(\mathbb{C}^3, \|\cdot\|_\infty$)}
Now let $(\mathbb{C}^2,\|\cdot\|_A)$ be the operator space induced by the map $\Phi_A: \mathbb{C}^2 \rightarrow \mathbb{C}^{m \times m}$ given by $\Phi(z_1, z_2)=z_1 A_1+z_2 A_2$, where $(A_1, A_2)$ are $m\times m$ diagonal matrices. Thus $\Phi$ is a complete isometry into a commutative $C^*$-algebra isometric to $\ell_\infty^m$ by construction.

In \cite{AFJS95}, the particular case of $m=3$ and
\[
(A_1, A_2) = \left(\begin{pmatrix} 1 & 0 & 0\\ 0&0&0\\ 0 & 0 & a_1 \end{pmatrix}, \begin{pmatrix} 0&0&0\\0&1&0\\0&0&a_2  \end{pmatrix}\right)
\]
was considered. With this choice, $\|(z_1,z_2)\|_A =\max\{|z_1|, |z_2|, |a_1 z_1 + a_2 z_2|\}$. It was noted in \cite{AFJS95} that if $a_1, a_2$ are chosen such that $|a_1|, |a_2| \leq 1$ while $|a_1| + |a_2| > 1$, then the norm $\| \cdot \|_A$ is not isometric to the $\ell^\infty$ norm, see the proof of Proposition 4.4 in \cite{AFJS95}. We state this in the form of a lemma below and provide an elementary proof. 
\begin{lem}
Let $\mathcal S\subseteq\ell_\infty^3$ be a $2$-dimensional complex subspace. Then $\mathcal S$ is not isometric to $\ell_\infty^2$ if and only if $\mathcal S$ is isometric to $(\mathbb{C}^2,\|\cdot\|_A)$ for some $a_1,a_2$ with $|a_1|\le 1$, $|a_2|\le 1$ and $|a_1|+|a_2|>1$.
\end{lem}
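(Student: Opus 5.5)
The plan is to parametrize $\mathcal S$ by its coordinate functionals and then normalize.

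\emph{Normal form.} Choose a basis of $\mathcal S$ and write $\mathcal S=\{(\phi_1(z),\phi_2(z),\phi_3(z)) : z\in\mathbb C^2\}$, where $\phi_1,\phi_2,\phi_3$ are linear functionals on $\mathbb C^2$ spanning the dual space. Then $\mathcal S$ is isometric to $\mathbb C^2$ with the norm $\max_i|\phi_i(z)|$. Several operations preserve this norm:
\begin{itemize}
\item permuting the $\phi_i$;
\item multiplying each $\phi_i$ by a unimodular scalar;
\item changing the basis of $\mathbb C^2$.
\end{itemize}

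\emph{Case 1: some $\phi_i$ is redundant.} If $|\phi_i|\le\max_{j\ne i}|\phi_j|$ identically for some $i$, we drop it. If the two remaining functionals are independent, then $z\mapsto(\phi_j(z),\phi_k(z))$ is an isometry onto $\ell_\infty^2$.

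If instead they are dependent, the maximum of the two moduli is a multiple of one functional. Then $\max_i|\phi_i|$ vanishes on a line, which contradicts the fact that the $\phi_i$ span the dual. So this sub-case cannot occur.

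\emph{Case 2: each $\phi_i$ matters.} Two of the functionals are independent, say $\phi_1,\phi_2$ after permuting. Change basis so that $\phi_1=z_1$ and $\phi_2=z_2$. Then $\phi_3=a_1z_1+a_2z_2$, and the norm is $\|\cdot\|_A$.

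\emph{Normalizing $a_1,a_2$.}
\begin{itemize}
\item If $|a_1|+|a_2|\le1$, then $|\phi_3|\le\max(|z_1|,|z_2|)$, so the space is $\ell_\infty^2$. Hence non-isometry forces $|a_1|+|a_2|>1$.
\item If $|a_1|>1$, we swap roles: use $\phi_3$ and $\phi_2$ as coordinates. Writing $z_1=(w-a_2z_2)/a_1$ with $w=\phi_3$, the third functional becomes $(1/a_1)w-(a_2/a_1)z_2$.
\end{itemize}
Repeat such exchanges: pick the pair $\phi_j,\phi_k$ for which $|\det(\phi_j,\phi_k)|$ is maximal, viewing the $\phi$'s as vectors in $\mathbb C^2$. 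With that pair as coordinates, Cramer's rule gives the third functional coefficients $\det(\phi_l,\phi_k)/\det(\phi_j,\phi_k)$ and $\det(\phi_j,\phi_l)/\det(\phi_j,\phi_k)$. By maximality both coefficients have modulus at most $1$. This yields $|a_1|,|a_2|\le1$ in one stroke, and the inequality $|a_1|+|a_2|>1$ then follows from the previous bullet unless the space is $\ell_\infty^2$.

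\emph{Converse.} Suppose $|a_i|\le1$ and $|a_1|+|a_2|>1$. We must show $(\mathbb C^2,\|\cdot\|_A)$ is not isometric to $\ell_\infty^2$. I would compare extreme points of the dual balls.
\begin{itemize}
\item The dual of $\ell_\infty^2$ is $\ell_1^2$. Its unit ball has exactly two circles' worth of extreme points, $\{\lambda e_1\}$ and $\{\lambda e_2\}$ with $|\lambda|=1$.
\item By Lemma~\ref{lem:dual-ball-and-contraction}(i), $B_{X^*}$ is the absolutely convex hull of $e_1$, $e_2$ and $(a_1,a_2)$.
\end{itemize}
The extreme points of $B_{X^*}$ are unimodular multiples of those of $e_1,e_2,(a_1,a_2)$ that do not lie in the absolutely convex hull of the other two. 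One checks the following.
\begin{itemize}
\item $(a_1,a_2)\notin B_{\ell_1^2}$, since $|a_1|+|a_2|>1$.
\item $e_1$ is not in the hull of $e_2$ and $(a_1,a_2)$. Indeed, $e_1=\mu e_2+\nu(a_1,a_2)$ forces $\nu=1/a_1$ and $\mu=-a_2/a_1$, with total weight $(1+|a_2|)/|a_1|>1$ because $|a_1|\le1$ and $a_2\ne0$. If $a_2=0$, then $|a_1|>1$, which is excluded. The same argument works for $e_2$.
\end{itemize}
So the extreme set consists of three circles, which are pairwise non-proportional. A linear isometry maps extreme points to extreme points and preserves the circle structure, so it would have to match three circles with two. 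Hence there is no isometry.

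\emph{Main obstacle.} The main delicate point is this extreme-point count, including the boundary cases where $|a_i|=1$. There I would verify non-membership directly, as above, using the minimal-representation formula of Lemma~\ref{lem:dual-ball-and-contraction}(i).
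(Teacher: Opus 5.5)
Your argument is correct, and it takes a different route from the paper in both directions.

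For ($\Rightarrow$), the paper proceeds in two steps. It first shows that each coordinate functional has norm one and that any two are independent, so that $a_1a_2\neq0$ and it may divide by $a_j$. It then splits into three cases on $u=|a_1|$, $v=|a_2|$ to decide which pair of functionals to use as coordinates. Your choice of the pair maximizing $|\det(\phi_j,\phi_k)|$ is the same idea in invariant form. In the paper's coordinates the three determinants have moduli $1$, $v$, $u$, and the paper's three cases are exactly ``which of these is largest''. Cramer's rule then gives $|a_1|,|a_2|\le1$ at once, with no division by $a_j$. It also makes your Case~1 unnecessary, as it does the paper's preliminary reductions, since the maximal determinant is nonzero as soon as the $\phi_i$ span the dual. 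The paper's version, in exchange, displays the explicit coefficient moduli in each regime.

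For ($\Leftarrow$), the paper only notes that $\|(z_1,z_2)\|_A=|a_1|+|a_2|>1$ at unimodular $(z_1,z_2)$. That shows $\|\cdot\|_A$ differs from $\|\cdot\|_\infty$ in these particular coordinates, not that no isometry exists, and the count of ``extreme faces'' is left to the reader. Your count supplies a genuine linear invariant. By Lemma~\ref{lem:dual-ball-and-contraction}(i), the extreme points of $B_{X^*}$ fill three circles on distinct complex lines, while those of $B_{\ell_1^2}$ fill two. The adjoint of a surjective isometry $X\to\ell_\infty^2$ would map one set onto the other.

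The one step you assert without proof is that a generator $p\in\{e_1,e_2,(a_1,a_2)\}$ is extreme exactly when it is not in the absolutely convex hull of the other two, call them $q,w$. This needs only two lines. Suppose $p$ is not in that hull and $p=\alpha_1p+\alpha_2q+\alpha_3w$ with $\sum|\alpha_i|\le1$. Then the representation must be $(1,0,0)$: otherwise $\alpha_1\neq1$, and dividing by $1-\alpha_1$ (using $|1-\alpha_1|\ge1-|\alpha_1|$) would put $p$ in the hull of $q,w$. A point whose only preimage in $B_{\ell_1^3}$ is a basis vector is extreme, because that basis vector is extreme in $B_{\ell_1^3}$. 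Add the facts that every extreme point of the image of $B_{\ell_1^3}$ is the image of an extreme point, and that your three generators are pairwise non-proportional. These fully justify your description of the extreme points of $B_{X^*}$.
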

\begin{proof}
    Let $\psi_1,\psi_2,\psi_3\in\mathcal S^*$ be the restrictions of the three coordinate functionals, so that $\|x\|=\max_i|\psi_i(x)|$ and $\|\psi_i\|\le1$.

($\Leftarrow$) If $\mathcal S=(\mathbb{C}^2,\|\cdot\|_A)$ with $|a_1|+|a_2|>1$, pick unimodular $z_1,z_2$ aligning $a_1z_1$ and $a_2z_2$; then $\|(z_1,z_2)\|_A=|a_1|+|a_2|>1=\|(z_1,z_2)\|_\infty$, and one checks the unit ball of $\|\cdot\|_A$ has more than four extreme ``faces", so it is not isometric to $\ell_\infty^2$. 

($\Rightarrow$) Suppose $\mathcal S\not\cong\ell_\infty^2$.

\subsubsection*{Each $\|\psi_i\|=1$\textup{:}} If, say, $\|\psi_3\|<1$ then by compactness $|\psi_3(x)|<\|x\|$ for all $x\ne0$, so $\|x\|=\max(|\psi_1(x)|,|\psi_2(x)|)$ and $x\mapsto(\psi_1 x,\psi_2 x)$ is an isometry onto a $2$-dimensional subspace of $\ell_\infty^2$, i.e. onto $\ell_\infty^2$ — excluded.

\subsubsection*{Any two of the $\psi_i$ are independent\textup{:}} If $\psi_2=c\psi_1$ then $|c|=1$ and $\|x\|=\max(|\psi_1x|,|\psi_3x|)$, giving $\mathcal S\cong\ell_\infty^2$ again.

So fix the basis $\psi_1,\psi_2$ of $\mathcal S^*$, identify $x\leftrightarrow (z_1,z_2)=(\psi_1x,\psi_2x)$, and write $\psi_3=a_1\psi_1+a_2\psi_2$ with $a_1,a_2\ne0$. Then $\|(z_1,z_2)\|=\max\{|z_1|,|z_2|,|a_1z_1+a_2z_2|\}$, and $|a_1|+|a_2|>1$ (otherwise the third term is dominated by the first two and $\mathcal S\cong\ell_\infty^2$).

It remains to arrange $|a_1|,|a_2|\le1$, and for that we are free to re-choose which pair of the $\psi_i$ is the basis. Put $u=|a_1|,v=|a_2|>0$. Using the pair $(\psi_1,\psi_3)$ one has $\psi_2=-\tfrac{a_1}{a_2}\psi_1+\tfrac1{a_2}\psi_3$, with coefficient moduli $(u/v,\,1/v)$; using $(\psi_2,\psi_3)$ one gets $(1/u,\,v/u)$. Hence

the pair $(\psi_1,\psi_2)$ works iff $u\le1$ and $v\le1$;
the pair $(\psi_1,\psi_3)$ works iff $v\ge1$ and $v\ge u$;
the pair $(\psi_2,\psi_3)$ works iff $u\ge1$ and $u\ge v$.

If the first fails then $\max(u,v)>1$; if $u\ge v$ the third applies, and if $v\ge u$ the second does. So some pair always works. Finally write $a_j=r_je^{i\theta_j}$ and apply $w_j=e^{i\theta_j}z_j$ to reach $E_{r,s}$ with $0<r,s\le1$, $r+s>1$.
\end{proof}

\section{Extreme points}
Let $\mathcal{H}$ be an $n$-dimensional Hilbert space and $\mathcal{L}(\mathcal{H})$ the algebra of all linear transformations on $\mathcal{H}$. Let $B_n[0,1]$ be the closed unit ball in $\mathcal{L}(\mathcal{H})$ with the operator norm.

\begin{defn}[Admissible perturbations]\label{def:admissible}
An element $\Delta \in \mathbb{C}^{n\times n}$ is an \emph{admissible perturbation} of $T\in B_n[0,1]$ if there exists $\epsilon_0 > 0$ such that $\|T+ \epsilon \Delta\| \leq 1$ for all $|\epsilon| \leq \epsilon_0$. The \emph{tangent cone} $\mathcal{T}_T$ is the set of all admissible perturbations of $T$.
\end{defn}

Any $T\in B_n[0,1]$ admits a polar decomposition $T= U |T|$, where $U$ is unitary and $|T|:= (T^* T)^{1/2}$. Since $T$ is a contraction, the eigenvalues of $|T|$ lie in $[0,1]$. Let $E$ be the projection onto the eigenspace of $|T|$ corresponding to the eigenvalue $1$ and $F$ be the projection onto the eigenspace of $|T^*|$ corresponding to the eigenvalue $1$. Since the non-zero eigenvalues of $|T|$ and $|T^*|$ coincide, $\dim \operatorname{ran}(E)=\dim \operatorname{ran}(F)=k$ (say) for some $0\leq k \leq n$.

\begin{lem} \label{lem:3.5}
For any $T\in B_n[0,1]$, $\Delta\in \mathcal{T}_T$ if and only if $\Delta E = 0$ and $F \Delta=0$.
\end{lem}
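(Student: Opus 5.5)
The plan is to prove the two implications separately. For necessity we test $T+\epsilon\Delta$ against unit vectors on which $T$ attains its norm. For sufficiency we block-decompose $T$ along $\operatorname{ran}E$ and $\operatorname{ran}F$, so that $\Delta$ only acts on the part where $T$ is a strict contraction.

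\emph{Necessity.} Suppose $\Delta\in\mathcal T_T$ and let $x\in\operatorname{ran}E$ be a unit vector. Then $|T|x=x$, so $\|Tx\|^2=\langle T^*Tx,x\rangle=1$. For every complex $\epsilon$ with $|\epsilon|\le\epsilon_0$,
\[
1\ge\|(T+\epsilon\Delta)x\|^2=1+2\operatorname{Re}\bigl(\epsilon\langle\Delta x,Tx\rangle\bigr)+|\epsilon|^2\|\Delta x\|^2 .
\]
\begin{itemize}
\item First we kill the linear term. If $\langle\Delta x,Tx\rangle\neq0$, choose the argument of $\epsilon$ so that $\epsilon\langle\Delta x,Tx\rangle>0$; then the right side exceeds $1$, a contradiction. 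Hence $\langle\Delta x,Tx\rangle=0$.
\item The inequality then reduces to $|\epsilon|^2\|\Delta x\|^2\le0$, so $\Delta x=0$.
\end{itemize}
This gives $\Delta E=0$. Next, admissibility of $\Delta$ for $T$ is equivalent to admissibility of $\Delta^*$ for $T^*$, since $\|T^*+\bar\epsilon\Delta^*\|=\|T+\epsilon\Delta\|$. The projection $E$ computed for $T^*$ is exactly $F$. Applying the first part to $(T^*,\Delta^*)$ gives $\Delta^*F=0$, that is $F\Delta=0$.

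\emph{Sufficiency.} Fix a polar decomposition $T=U|T|$ with $U$ unitary, which is possible since $\mathcal H$ is finite dimensional.
\begin{itemize}
\item Since $TT^*=U|T|^2U^*$, we get $|T^*|=U|T|U^*$, and therefore $F=UEU^*$.
\item $E$ commutes with $|T|$. Hence $T$ maps $\operatorname{ran}E$ into $U\operatorname{ran}E=\operatorname{ran}F$, and maps $\operatorname{ran}E^{\perp}$ into $U\operatorname{ran}E^{\perp}=\operatorname{ran}F^{\perp}$.
\end{itemize}
With respect to $\mathcal H=\operatorname{ran}E\oplus\operatorname{ran}E^\perp$ in the domain and $\mathcal H=\operatorname{ran}F\oplus\operatorname{ran}F^\perp$ in the range, this gives
\[
T=\begin{pmatrix}T_1&0\\0&T_0\end{pmatrix},\qquad \|T_1\|= 1 .
\]
The norm of $T_0$ satisfies $\|T_0\|=\max\{\lambda\in\sigma(|T|):\lambda<1\}<1$, using finite dimensionality; if $\operatorname{ran}E^\perp=0$ there is nothing further to check. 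The hypotheses $\Delta E=0$ and $F\Delta=0$ say exactly that
\[
\Delta=\begin{pmatrix}0&0\\0&\Delta_0\end{pmatrix}
\]
in the same decomposition. Therefore
\[
\|T+\epsilon\Delta\|=\max\{\|T_1\|,\ \|T_0+\epsilon\Delta_0\|\},
\]
which is at most $1$ whenever $|\epsilon|\,\|\Delta_0\|\le1-\|T_0\|$. This provides the required $\epsilon_0>0$ (any $\epsilon_0$ works if $\Delta_0=0$).

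The only delicate point is the bookkeeping showing that $T$ is block diagonal with respect to these two decompositions, that is, that $F=UEU^*$. Everything else is routine; the necessity argument relies essentially on $\epsilon$ ranging over complex values, which is what allows the linear term to be eliminated.
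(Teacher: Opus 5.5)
Your proof is correct and takes essentially the same route as the paper. For necessity you test $T+\epsilon\Delta$ on unit vectors in $\operatorname{ran}E$ (your expansion of $\|(T+\epsilon\Delta)x\|^2$ is an explicit form of the paper's extreme-point argument) and then pass to $(T^*,\Delta^*)$. For sufficiency you use the same block-diagonal form of $T$ relative to $\operatorname{ran}E\oplus\operatorname{ran}E^\perp$ and $\operatorname{ran}F\oplus\operatorname{ran}F^\perp$, justified somewhat more cleanly through $F=UEU^*$. One correction to your closing remark: complex $\epsilon$ is not essential for necessity, since taking $\epsilon=\pm\delta$ with $\delta>0$ real and adding the two inequalities already gives $2\delta^2\|\Delta x\|^2\le 0$.
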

\begin{proof}
\textit{Necessity.} Suppose $\Delta\in \mathcal{T}_T$ and $x\in \operatorname{ran}(E)$ with $\|x\|_2=1$. Then $|T|x = x$ and $\|Tx\| = \|U|T|x\| = \|Ux\|=1$. For all $|\epsilon| \leq \epsilon_0$,
\[
\|(T+ \epsilon \Delta)x\| = \|Tx + \epsilon \Delta x\| \leq 1.
\]
Since $\|Tx\|=1$ is an extreme value of the norm on the unit ball in $\ell_n^2$, we must have $\Delta x= 0$ for all $x\in \operatorname{ran}(E)$, hence $\Delta E = 0$. Second, $\|(T^* + \bar{\epsilon} \Delta^*)x\| \leq 1$ shows $\Delta^* \in \mathcal{T}_{T^*}$, and applying the same argument to $T^*$ gives $\Delta^* F = 0$, hence $F \Delta = 0$.

\textit{Sufficiency.} Suppose $\Delta E=0$ and $F\Delta=0$. The space $\mathcal{H}$ decomposes as
\begin{equation}\label{decompH}
\operatorname{ran}(E) \oplus \operatorname{ran}(E)^\perp = \mathcal{H} = \operatorname{ran}(F) \oplus \operatorname{ran}(F)^\perp.
\end{equation}
With respect to these decompositions (using $E$ in the domain and $F$ in the range), the operator $T$ has block form $T = \begin{pmatrix} T_{11} & 0 \\ 0 & T_{22} \end{pmatrix}$, where $T_{11}: \operatorname{ran}(E) \to \operatorname{ran}(F)$ is unitary and $T_{22}: \operatorname{ran}(E)^\perp \to \operatorname{ran}(F)^\perp$ is a strict contraction with $\|T_{22}\| = r < 1$.

Indeed, if $x\in \operatorname{ran}(E)$, then $TT^*(Tx) = T(T^*Tx) = Tx$, so $Tx \in \operatorname{ran}(F)$, giving $T_{12}=0$. For $y\in \operatorname{ran}(E)^\perp$, the map $U$ sends the $\lambda$-eigenspace of $|T|$ (for $\lambda<1$) into the $\lambda$-eigenspace of $|T^*|$, so $Ty\in \operatorname{ran}(F)^\perp$, giving $T_{21}=0$.

The conditions $\Delta E=0$ and $F\Delta=0$ together force $\Delta = \begin{pmatrix} 0 & 0 \\ 0 & \Delta_0 \end{pmatrix}$, where $\Delta_0: \operatorname{ran}(E)^\perp \to \operatorname{ran}(F)^\perp$. For $x = x_1 \oplus x_2$ with $x_1\in \operatorname{ran}(E)$ and $x_2\in \operatorname{ran}(E)^\perp$,
\[
\|(T+\epsilon \Delta) x\|^2 = \|Tx_1\|^2 + \|(T_{22}+\epsilon \Delta_0)x_2\|^2 = \|x_1\|^2 + \|(T_{22}+\epsilon \Delta_0)x_2\|^2.
\]
Since $\|T_{22}\|=r<1$, by continuity of the operator norm, there exists $\epsilon_0 > 0$ such that $\|T_{22}+\epsilon \Delta_0\| \leq 1$ for $|\epsilon| \leq \epsilon_0$. Therefore $\|(T+\epsilon \Delta) x\|^2 \leq \|x_1\|^2+\|x_2\|^2 = \|x\|^2$.
\end{proof}

\subsection{Extreme points of $\mathscr{V}$}\label{sec:extremeV}
Throughout this subsection $a_1,a_2$ are fixed complex numbers with $|a_1|\leq 1,$ $|a_2|\leq 1$
and $|a_1|+|a_2|>1$; note that these force $a_1\neq 0\neq a_2$.
Let $\mathscr{V}\subset \mathcal{M}_n \times \mathcal{M}_n$ be the set
\[
\mathscr{V} = \{(V_1, V_2) \mid \|V_1\| \leq 1,\, \|V_2\| \leq 1,\, \|V_3\| \leq 1 \},
\qquad V_3:=a_1 V_1+ a_2 V_2 .
\]
By Lemma \ref{lem:dual-ball-and-contraction}(ii), applied with $\mathcal{E}=\mathcal{M}_n$, $\mathscr{V}$
is precisely the set of pairs of $n\times n$ matrices that induce a $\mathcal M_n$-valued  contractive linear map on
the dual of the normed linear space $(\mathbb{C}^2, \|\cdot\|_A)$.
Clearly, $\mathscr{V}$ is non-empty, convex, compact and balanced.
When $a_1=r$ and $a_2=s$ are positive reals we write
\begin{equation}\label{eq:Vrs}
\mathscr{V}_{r,s}(n) := \{(V_1,V_2)\in\mathcal{M}_n\times\mathcal{M}_n: \|V_1\|\leq 1,\ \|V_2\|\leq 1,\ \|rV_1+sV_2\|\leq 1\}.
\end{equation}
 
We have $\mathscr{V} = \mathcal{B}_1 \cap \mathcal{B}_2 \cap \mathcal{B}_3$, where
\begin{enumerate}
\item $\mathcal{B}_1= \{(V_1,V_2): \|V_1\| \leq 1\}=B_n[0,1]\times \mathcal{M}_n$,
\item $\mathcal{B}_2= \{(V_1,V_2): \|V_2\| \leq 1\} = \mathcal{M}_n \times B_n[0,1]$,
\item $\mathcal{B}_3= \{(V_1,V_2): \|a_1V_1+a_2 V_2\| \leq 1\}$.
\end{enumerate}
 
Fix $(V_1,V_2)\in\mathscr{V}$. For $i=1,2,3$, let $E_i$ be the spectral projection onto
the eigenspace of $|V_i|$ with eigenvalue $1$, and $F_i$ the spectral projection onto
the eigenspace of $|V_i^*|$ with eigenvalue $1$.

\begin{prop}\label{prop:tangent}
The tangent cone of a pair $(V_1,V_2)$ in $\mathscr{V}$ is
\[
\mathcal{T}_{(V_1,V_2)} =\big\{ (\Delta_1,\Delta_2)\mid F_1\Delta_1 = 0 = \Delta_1 E_1,\; F_2\Delta_2=0=\Delta_2 E_2,\; F_3 \Delta_3 =0= \Delta_3 E_3\big\}.
\]
\end{prop}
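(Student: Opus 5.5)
The plan is to reduce the statement to Lemma~\ref{lem:3.5}, applied separately to each of the three contractions $V_1$, $V_2$, $V_3=a_1V_1+a_2V_2$. Here the tangent cone of $(V_1,V_2)$ in $\mathscr V$ is understood as in Definition~\ref{def:admissible}: $(\Delta_1,\Delta_2)$ is admissible if there is $\epsilon_0>0$ with $(V_1+\epsilon\Delta_1,V_2+\epsilon\Delta_2)\in\mathscr V$ for all complex $|\epsilon|\le\epsilon_0$. Throughout we set $\Delta_3:=a_1\Delta_1+a_2\Delta_2$. The key observation is linearity:
\[
a_1(V_1+\epsilon\Delta_1)+a_2(V_2+\epsilon\Delta_2)=V_3+\epsilon\Delta_3 .
\]
It follows that membership of the perturbed pair in $\mathscr V=\mathcal B_1\cap\mathcal B_2\cap\mathcal B_3$ is equivalent to the three scalar conditions $\|V_i+\epsilon\Delta_i\|\le1$ for $i=1,2,3$.

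\textit{Necessity.} Suppose $(\Delta_1,\Delta_2)\in\mathcal T_{(V_1,V_2)}$ with constant $\epsilon_0$. Then for each $i\in\{1,2,3\}$ we have $\|V_i+\epsilon\Delta_i\|\le1$ whenever $|\epsilon|\le\epsilon_0$. This says exactly that $\Delta_i\in\mathcal T_{V_i}$, the tangent cone of $V_i$ in $B_n[0,1]$; note that $V_i\in B_n[0,1]$ because $(V_1,V_2)\in\mathscr V$. Lemma~\ref{lem:3.5} then gives $F_i\Delta_i=0=\Delta_iE_i$ for each $i$, where $E_i$ and $F_i$ are the spectral projections defined just before the proposition.

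\textit{Sufficiency.} Suppose the six equations $F_i\Delta_i=0=\Delta_iE_i$, $i=1,2,3$, all hold. By the sufficiency half of Lemma~\ref{lem:3.5}, for each $i$ there is $\epsilon_i>0$ with $\|V_i+\epsilon\Delta_i\|\le1$ for $|\epsilon|\le\epsilon_i$. Take $\epsilon_0=\min\{\epsilon_1,\epsilon_2,\epsilon_3\}>0$. For $|\epsilon|\le\epsilon_0$, the pair $(V_1+\epsilon\Delta_1,V_2+\epsilon\Delta_2)$ then lies in $\mathcal B_1\cap\mathcal B_2$, and by the linearity identity above also in $\mathcal B_3$. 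Hence $(\Delta_1,\Delta_2)$ is admissible.

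There is no substantial obstacle; the argument is a direct bookkeeping reduction. The only point needing care is that the third condition concerns the derived matrix $V_3$, so one must check that its perturbation is $\Delta_3$ with the \emph{same} $\epsilon$. Linearity guarantees this, and it is why the constraint on $\Delta_3$ ties $\Delta_1$ and $\Delta_2$ together rather than acting on them independently. One should also confirm that Lemma~\ref{lem:3.5} is used with complex $\epsilon$, matching Definition~\ref{def:admissible}, and that it applies equally to $V_3$, which is a contraction by the definition of $\mathscr V$.
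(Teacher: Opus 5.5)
Your proposal is correct and follows essentially the same route as the paper. You split admissibility in $\mathscr V=\mathcal B_1\cap\mathcal B_2\cap\mathcal B_3$ into admissibility of each $\Delta_i$ for $V_i$, using linearity for $V_3$ and the minimum of the three constants $\epsilon_i$, and then apply Lemma~\ref{lem:3.5} three times.
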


\begin{proof}
Directly from Definition \ref{def:admissible}, a pair $(\Delta_1,\Delta_2)$ is an
admissible perturbation of $(V_1,V_2)$ in $\mathcal{B}_1\cap\mathcal{B}_2\cap\mathcal{B}_3$
if and only if it is one in each $\mathcal{B}_i$ separately: one direction is trivial,
and for the other it suffices to take the smallest of the three constants $\epsilon_0$.
Likewise $(\Delta_1,\Delta_2)$ is admissible in $\mathcal{B}_1=B_n[0,1]\times\mathcal{M}_n$
if and only if $\Delta_1$ is admissible for $V_1$ in $B_n[0,1]$, and similarly for
$\mathcal{B}_2$; while it is admissible in $\mathcal{B}_3$ if and only if there is
$\epsilon_0>0$ with $\|V_3+\epsilon\Delta_3\|\leq 1$ for all $|\epsilon|\leq\epsilon_0$,
that is, if and only if $\Delta_3$ is admissible for $V_3$. Now apply
Lemma~\ref{lem:3.5} three times.
\end{proof}

\begin{thm} \label{thm:3.8}
A pair $(V_1,V_2)\in \mathscr{V}$ is an extreme point if and only if the tangent cone $\mathcal{T}_{(V_1,V_2)}$ contains only the pair $(0,0)$.
\end{thm}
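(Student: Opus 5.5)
The plan is to prove the two implications separately. The key input is the description of the tangent cone in Proposition~\ref{prop:tangent}, which comes from Lemma~\ref{lem:3.5}.

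\emph{Nontrivial tangent cone implies not extreme.} Suppose $(\Delta_1,\Delta_2)\in\mathcal{T}_{(V_1,V_2)}$ is nonzero. By Definition~\ref{def:admissible}, and the argument in the proof of Proposition~\ref{prop:tangent}, there is $\epsilon_0>0$ such that $(V_1,V_2)\pm\epsilon_0(\Delta_1,\Delta_2)\in\mathscr V$. Then
\[
(V_1,V_2)=\tfrac12\bigl[(V_1,V_2)+\epsilon_0(\Delta_1,\Delta_2)\bigr]+\tfrac12\bigl[(V_1,V_2)-\epsilon_0(\Delta_1,\Delta_2)\bigr]
\]
expresses $(V_1,V_2)$ as the midpoint of two distinct points of $\mathscr V$, so it is not extreme.

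\emph{Not extreme implies nontrivial tangent cone.} Suppose $(V_1,V_2)=\tfrac12(P+Q)$ with $P\neq Q$ in $\mathscr V$. Put $(\Delta_1,\Delta_2):=P-(V_1,V_2)$, which is nonzero and satisfies $(V_1,V_2)\pm(\Delta_1,\Delta_2)\in\mathscr V$. Setting $\Delta_3:=a_1\Delta_1+a_2\Delta_2$, linearity gives $V_3\pm\Delta_3=a_1(V_1\pm\Delta_1)+a_2(V_2\pm\Delta_2)$. Hence
\[
\|V_i\pm\Delta_i\|\le 1\quad\text{for } i=1,2,3.
\]

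The obstacle is that admissibility requires $\|V_i+\epsilon\Delta_i\|\le1$ for all \emph{complex} $\epsilon$ with $|\epsilon|\le\epsilon_0$, whereas we only control the real segment $\epsilon\in[-1,1]$. I plan to bypass this with the algebraic criterion of Lemma~\ref{lem:3.5}. It suffices to show $\Delta_iE_i=0$ and $F_i\Delta_i=0$ for $i=1,2,3$; Proposition~\ref{prop:tangent} then places $(\Delta_1,\Delta_2)$ in $\mathcal{T}_{(V_1,V_2)}$.

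For the first condition, take a unit vector $x\in\operatorname{ran}(E_i)$, so $\|V_ix\|=1$ as in the proof of Lemma~\ref{lem:3.5}. The parallelogram law gives
\[
2\|V_ix\|^2+2\|\Delta_ix\|^2=\|(V_i+\Delta_i)x\|^2+\|(V_i-\Delta_i)x\|^2\le 2.
\]
Since $\|V_ix\|=1$, this forces $\Delta_ix=0$, hence $\Delta_iE_i=0$. For the second condition, note that $\|V_i^*\pm\Delta_i^*\|\le1$ as well. Applying the same argument to $V_i^*$, whose eigenvalue-one projection for $|V_i^*|$ is $F_i$, gives $\Delta_i^*F_i=0$, that is, $F_i\Delta_i=0$.

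Thus $\mathcal{T}_{(V_1,V_2)}$ contains the nonzero pair $(\Delta_1,\Delta_2)$. Combining the two implications proves the theorem.
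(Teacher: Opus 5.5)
Your proof is correct and complete. The paper states Theorem~\ref{thm:3.8} without proof, immediately after Proposition~\ref{prop:tangent}, so there is no written argument to compare against; yours fills that gap in the natural way.

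The one non-formal point is the one you isolate. A midpoint decomposition only gives $\|V_i\pm\Delta_i\|\le 1$ for two real perturbations, whereas admissibility is defined for all complex $\epsilon$. You resolve this correctly:

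\begin{enumerate}
\item The necessity half of Lemma~\ref{lem:3.5} needs only the perturbations $\pm\Delta_i$. This is your parallelogram-law step, that is, strict convexity of the Euclidean ball, and it yields $\Delta_iE_i=0=F_i\Delta_i$.
\item The sufficiency half of Lemma~\ref{lem:3.5}, applied through Proposition~\ref{prop:tangent}, then upgrades these conditions to admissibility for all complex $|\epsilon|\le\epsilon_0$.
\end{enumerate}

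The converse direction is immediate from the definition, as you say.
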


\begin{cor}
It follows from Theorem~\ref{thm:3.8} that if any two of the three contractions $V_1,V_2,V_3$ are unitary, then $(V_1,V_2)$ must be an extreme point of $\mathscr{V}$. 
\end{cor}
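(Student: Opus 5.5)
The statement to prove is the Corollary: if any two of $V_1,V_2,V_3$ are unitary, then $(V_1,V_2)$ is an extreme point of $\mathscr V$. By Theorem~\ref{thm:3.8}, it suffices to show that the tangent cone $\mathcal T_{(V_1,V_2)}$ is trivial. By Proposition~\ref{prop:tangent}, an element $(\Delta_1,\Delta_2)$ of the cone satisfies $F_i\Delta_i=0=\Delta_iE_i$ for $i=1,2,3$, where $\Delta_3=a_1\Delta_1+a_2\Delta_2$.

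The first observation is that if $V_i$ is unitary, then $|V_i|=|V_i^*|=I$. Hence $E_i=F_i=I$, and the condition $\Delta_iE_i=0$ already forces $\Delta_i=0$. So each unitary among $V_1,V_2,V_3$ kills the corresponding perturbation $\Delta_i$.

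There are then three cases according to which two of the three are unitary.

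\begin{itemize}
\item If $V_1$ and $V_2$ are unitary, then $\Delta_1=\Delta_2=0$ immediately.
\item If $V_1$ and $V_3$ are unitary, then $\Delta_1=0$ and $\Delta_3=a_1\Delta_1+a_2\Delta_2=a_2\Delta_2=0$. Since $a_2\neq0$ (recorded at the start of Subsection~\ref{sec:extremeV}, as a consequence of $|a_1|,|a_2|\le1$ and $|a_1|+|a_2|>1$), we get $\Delta_2=0$.
\item If $V_2$ and $V_3$ are unitary, the same argument applies with the roles of the indices exchanged, using $a_1\neq0$.
\end{itemize}

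In every case the tangent cone is $\{(0,0)\}$, so Theorem~\ref{thm:3.8} gives extremality.

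There is no real obstacle here. The only point needing care is the nonvanishing of $a_1$ and $a_2$, and that was established earlier from the standing hypotheses.
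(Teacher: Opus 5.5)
Your proof is correct and follows the paper's argument. A unitary $V_i$ gives $E_i=F_i=I$ and hence $\Delta_i=0$; in the mixed cases the relation $\Delta_3=a_1\Delta_1+a_2\Delta_2$ together with $a_1a_2\neq0$ forces the remaining perturbation to vanish, and Theorem~\ref{thm:3.8} then gives extremality. You also state explicitly the nonvanishing of $a_1,a_2$, which the paper uses only tacitly.
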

\begin{proof}
If $V_3$ and one of $V_1, V_2$ are unitary, then $\Delta_3=0$ (since $V_3$ is an extreme point of the usual unit ball), so $a_1 \Delta_1 + a_2\Delta_2 =0$. But if one of $V_1$ or $V_2$ is unitary, the corresponding $\Delta$ vanishes, forcing both to vanish. If both $V_1$ and $V_2$ are unitary, both $\Delta_1$ and $\Delta_2$ must vanish immediately.
\end{proof}

\begin{lem}\label{existnonnormalext} For every $0<r,s\leq 1$, $r+s > 1$,
there exists a nonnormal matrix $Y_t\in \mathcal{M}_2$ such that $(I,Y_t)$ is an extreme point of $\mathscr V_{r,s}(2).$
\end{lem}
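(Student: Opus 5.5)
The approach is to reduce extremality of $(I,Y)$ to a condition on two pairs of vectors, build a normal pair that meets it, and then perturb to a non-normal one.

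\textbf{Reduction via the tangent cone.} By Theorem~\ref{thm:3.8} and Proposition~\ref{prop:tangent}, it suffices to produce a non-normal $Y\in\mathcal M_2$ with $(I,Y)\in\mathscr V_{r,s}(2)$ and trivial tangent cone. Since $V_1=I$ is unitary, $E_1=F_1=I$, which forces $\Delta_1=0$. Then $\Delta_3=s\Delta_2$, so the remaining conditions read
\[
\Delta_2E_2=0,\qquad F_2\Delta_2=0,\qquad \Delta_2E_3=0,\qquad F_3\Delta_2=0,
\]
where $E_2,F_2$ come from $Y$ and $E_3,F_3$ come from $rI+sY$. Suppose $\|Y\|=1$ is attained at a unit vector $e$, and $\|rI+sY\|=1$ is attained at a unit vector $f$, with $e$ and $f$ linearly independent. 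Then $\Delta_2$ vanishes on $\operatorname{span}\{e,f\}=\mathbb C^2$, so $\Delta_2=0$. The first two conditions alone already suffice; the $F$-conditions are not needed. So the whole task is to find a non-normal $Y$ with
\[
\|Y\|=1,\qquad \|rI+sY\|=1,
\]
whose norms are attained at independent vectors. (Note that $Y$ cannot be unitary, since it must be non-normal.)

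\textbf{Normal seed.} I first build a normal model. I would pick $Y_0=\operatorname{diag}(\lambda,\mu)$ with
\[
|\lambda|=1,\quad |r+s\lambda|<1,\qquad |\mu|<1,\quad |r+s\mu|=1.
\]
Such choices exist because $r+s>1$. For $\lambda$, take $\lambda=-1$: then $|r-s|<1$. For $\mu$, write $\mu=(w-r)/s$ with $|w|=1$; the condition $|\mu|<1$ means $|w-r|<s$, and $w=1$ gives $1-r<s$. I would perturb $w$ slightly off $1$ if needed to keep everything strict. For this $Y_0$, $\|Y_0\|=1$ is attained only at $e_1$ and $\|r+sY_0\|=1$ only at $e_2$, so $(I,Y_0)$ is extreme. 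However, $Y_0$ is normal.

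\textbf{Perturbation to a non-normal $Y_t$.} I then take
\[
Y_t=\begin{pmatrix}\lambda_t & t\\ 0 & \mu_t\end{pmatrix},\qquad t>0 \text{ small},
\]
which is non-normal whenever $t\neq0$. The diagonal entries are chosen near $(\lambda,\mu)$ so that $\|Y_t\|=1$ and $\|rI+sY_t\|=1$ hold exactly. Concretely, I would shrink the moduli of $\lambda$ and of $r+s\mu$ along rays, i.e. set $\lambda_t=c_t\lambda$ and $r+s\mu_t=d_t(r+s\mu)$. I would then solve for $c_t,d_t\in(0,1)$ using the explicit $2\times2$ largest-singular-value formula. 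Each equation is monotone in its own parameter and depends only weakly on the other, so the intermediate value theorem, or the implicit function theorem at $t=0$, gives a solution with $c_t,d_t\to1$.

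For small $t$, each of $Y_t$ and $rI+sY_t$ has two distinct singular values, a top one equal to $1$ and a second one bounded away from $1$. So $E_2$ and $E_3$ have rank one, and their ranges are close to $e_1$ and $e_2$ respectively. Hence the maximizing vectors are independent, and the reduction in the first paragraph finishes the proof.

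\textbf{Main obstacle.} The delicate step is the simultaneous solvability of the two norm equations while keeping all the strict inequalities, namely the simple top singular values and the independence of the maximizing vectors. I would handle it by continuity from $t=0$. Failing that, I would write $Y_t$ explicitly via the identity $\|M\|=1\iff \det(I-M^*M)=0$ with $\operatorname{tr}(I-M^*M)>0$, which turns each norm condition into a single scalar equation.
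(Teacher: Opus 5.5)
Your proof is correct, but it constructs the non-normal extreme point differently from the paper. The reduction is the same as the paper's. Since $V_1=I$ is unitary, $\Delta_1=0$ and $\Delta_3=s\Delta_2$, so it suffices that $\|Y\|=1$ and $\|rI+sY\|=1$ be attained at linearly independent vectors.

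\textbf{Your route.} You argue perturbatively. You start from the normal extreme point $Y_0=\operatorname{diag}\bigl(-1,\tfrac{1-r}{s}\bigr)$; your choice $w=1$ already gives all the strict inequalities, so no further perturbation of $w$ is needed. You then add a small off-diagonal entry $t$ and restore the two norm equalities by adjusting the diagonal.

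The step you flag as the main obstacle is in fact routine. Use the criterion $\bigl\|\begin{pmatrix} a&b\\0&d\end{pmatrix}\bigr\|\le 1 \iff |a|,|d|\le 1$ and $|b|^2\le(1-|a|^2)(1-|d|^2)$. With $\lambda_t=-c$ and $r+s\mu_t=d$, your equations become $t^2=(1-c^2)\bigl(1-\tfrac{(d-r)^2}{s^2}\bigr)$ and $s^2t^2=(1-(r-sc)^2)(1-d^2)$. Their Jacobian in $(c,d)$ at $(1,1,0)$ is diagonal, with entries $-2(1-\mu^2)$ and $-2(1-(r-s)^2)$, both nonzero. So the implicit function theorem applies, and the equations themselves force $0<c,d<1$ for $t\neq 0$. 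The second singular values, $c|\mu_t|$ and $|r-sc|\,d$, stay near $|\mu|<1$ and $|r-s|<1$. Hence the top right singular vectors stay near $e_1$ and $e_2$, which gives the independence you need.

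\textbf{The paper's route.} The paper writes down a closed form instead. The matrix $Y_t=\begin{pmatrix} t&1-t^2\\0&t\end{pmatrix}$ has the property that $rI+sY_t$ is again of the same shape, namely $Y_c$ with $c=r+st$. This happens once $t$ is taken to be a root of the quadratic $1-(r+st)^2=s(1-t^2)$ lying in $\bigl(-1,\tfrac{1-r}{s}\bigr)$. The norm-attaining vectors are then $\binom{t}{1}$ and $\binom{c}{1}$, which are independent because $c-t\geq r+s-1>0$.

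\textbf{What each buys.} The paper's approach gives an explicit matrix, with a single repeated eigenvalue, and needs no limiting argument; this is convenient if one wants to quantify the failure of complete contractivity. Your approach gives a more conceptual picture. Non-normal extreme points $(I,Y)$ occur throughout a neighbourhood of any normal extreme point at which $\|Y\|$ and $\|rI+sY\|$ are attained in different directions. The cost is an appeal to the implicit function theorem and to continuity of spectral projections.
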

\begin{proof} Recall that $\Big\|\begin{pmatrix} a & b\\ 0 & a \end{pmatrix}\Big\| \leq 1$ if and only if $|b| \leq 1-|a|^2$. 

    Let $Y_t = \begin{pmatrix} t&1-t^2 \\ 0 & t \end{pmatrix}$. Choosing $|t| < 1$, the norm $\|Y_t\| = 1$. In this case, $Y_t$ is not normal. 
    So, we must have $-1<t<\frac{1-r}{s}<1$. Also, choosing 
     $t$ such that $1-(r+st)^2 = s(1-t^2)$ we have $\|r I + sY_t\| = 1$. Thus, the pair $(I,Y_t)$ is in $\mathscr{V}_{r,s}$ for $t$ such that 
    \begin{equation}\label{choice of t} -1<t<\frac{1-r}{s}<1, \,\, \text{and } 1-(r+st)^2 = s(1-t^2).\end{equation}

We claim the function $g(t):=1-(r+st)^2-s(1-t^2)$ has a root in $\bigl(-1,\tfrac{1-r}{s}\bigr)$. Indeed
\[
g(-1)=1-(r-s)^2>0 ,\qquad
g\!\left(\tfrac{1-r}{s}\right)=-s\Bigl(1-\tfrac{(1-r)^2}{s^2}\Bigr)<0 ,
\]
the first because $|r-s|<1$ (either $r-s=1$ or $s-r=1$ would force one of $r,s$ to exceed $1$), the second because $\bigl|\tfrac{1-r}{s}\bigr|<1$. 

If $s=1,$ then the unique root $t=-r/2$ of 
$g(t)=-2rt-r^2$ lies in $(-1,1-r),$ since $0<r\leq 1$.
If $0<s<1$ the two roots of $g$ are
\[
t_\pm=\frac{r\pm\sqrt{\dfrac{r^2-(1-s)^2}{s}}}{1-s},
\]
the discriminant being positive because
$r^2s^2-s(1-s)(1-s-r^2)=s\bigl[r^2-(1-s)^2\bigr]=s(r+s-1)(r+1-s)>0$.
Since $t_+>\frac{r}{1-s}>1$, the root in $\bigl(-1,\tfrac{1-r}{s}\bigr)$ must be $t_-$.
Fix such a $t$ for the rest of the proof.

Put $c= r+st$. Because $g(t)=0$ we have $s(1-t^2)=1-c^2$, and therefore
\[
rI + s Y_t=\begin{pmatrix} c& 1-c^2 \\ 0 & c\end{pmatrix}=Y_c ,
\]
with $|c|<1$. The vectors $e_t=\binom{t}{1}$ and $e_c=\binom{c}{1}$ are linearly
independent, since
\[c-t=r-(1-s) t\geq r-(1-s)=r+s-1>0,\]
using $t<1$ and $1-s\geq 0$. Moreover $Y_te_t=\binom{1}{t}$ and $Y_ce_c=\binom{1}{c}$,
so that
\[\|Y_t e_t\|= \|e_t\| \quad\text{and}\quad \|Y_c e_c\|=\|e_c\|,\]
that is, $e_t$ and $e_c$ are norm-attaining for the contractions $Y_t$ and $Y_c$, and
$\mathbb{C} e_t=\ker(I-Y_t^*Y_t)$, $\mathbb{C} e_c=\ker(I-Y_c^*Y_c)$ (each kernel is one
dimensional because $\det Y_t=t^2$ and $\det Y_c=c^2$ have modulus less than one).

Let $(\Delta_1,\Delta_2)$ be an admissible perturbation of $(I,Y_t)$. Since $I$ is
unitary, $\Delta_1=0$, and $\Delta_3=s\Delta_2$. Writing $\Delta:=\Delta_2$,
Proposition~\ref{prop:tangent} gives $\Delta e_t=0=s\Delta e_c$. As $e_t, e_c$ span
$\mathbb{C}^2$ and $s>0$, we get $\Delta = 0$. By Theorem~\ref{thm:3.8}, $(I,Y_t)$ is an
extreme point of $\mathscr{V}_{r,s}(2)$.
\end{proof}
\section{Contractive linear maps and homomorphisms} 
Let $\mathbb{B}$ be a ball in $\mathbb{C}^m$ with respect to some norm and let $\operatorname{H}^\infty(\mathbb{B})$ be the algebra of bounded holomorphic functions.  Note that an element $P$ of $\mathcal{M}_k \otimes \operatorname{H}^\infty(\mathbb{B})$ is a holomorphic function 
from $\mathbb{B}$ to $\mathcal{M}_k$ and therefore has a power series expansion of the form 
\[
P(z) = P(0) + z_1 P_1 + \cdots + z_m P_m + \text{ higher order terms}.
\]
Thus the derivative $DP(0)$ is a linear transformation from $\mathbb{C}^m$ to $\mathcal{M}_k$ and is of the form $(P_1, \dots , P_m)$, that is, $P_i = (\partial_i P)(0)$, $i=1, \ldots ,m$. For any fixed but arbitrary $w\in \mathbb{B}$ and a $m$-tuple $V_1, \ldots, V_m$ in $End(\mathbb{C}^n)$,
define the homomorphism $\varrho_V^{(k)}:\mathcal{M}_k \otimes \operatorname{H}^\infty(\mathbb{B}) \to \mathcal{M}_{2kn}$ by setting   
\[\varrho^{(k)}_V(P) = \begin{pmatrix}
P(0)\otimes I_n & \sum_{j=1}^m P_j\otimes V_j\\
0 & P(0)\otimes I_n
\end{pmatrix}, \quad P \in \mathcal{M}_k \otimes \operatorname{H}^\infty(\mathbb{B}).\]
There is a unique $C^*$-norm on the algebra $\mathcal{M}_k \otimes \operatorname{H}^\infty(\mathbb B)$, namely, 
\[\|P\|_\infty:= \sup\{\|P(z)\|_{\ell_2^k \to \ell_2^k}: z\in \mathbb{B}\}.\]

There is also a linear map $L^{(k)}_V: \mathcal{M}_k \otimes \operatorname{H}^\infty(\mathbb{B}) \to \mathcal{M}_{kn}$
defined by setting 
\[L_V^{(k)}(P) = \sum_{j=1}^m P_j\otimes V_j,\,\, P\in \mathcal{M}_k \otimes \operatorname{H}^\infty(\mathbb{B}).\]

For any $k\in \mathbb{N}$, the contractivity of the homomorphism $\varrho^{(k)}$ is equivalent to the contractivity of $L_V^{(k)}$. One direction of this equivalence is evident, the other direction is the zero lemma stated below, {{
see \cite[Lemma 3.3]{GM} and  \cite[Lemma 1.6]{GMNS}.}}

\begin{lem} \label{lem:rho-vs-L}
A contractive homomorphism $\varrho$, $\varrho := \varrho^{(1)}$, is completely contractive, that is, all the homomorphisms $\varrho^{(k)}$ are contractive for all $k > 1$ if and only if the same is true of the linear map $L_V$, $L_V:=L_V^{(1)}$. 
\end{lem}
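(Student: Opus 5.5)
The plan is to prove the two directions separately, for each fixed $k$: contractivity of $\varrho_V^{(k)}$ will be shown equivalent to contractivity of $L_V^{(k)}$. The lemma then follows by taking all $k$ at once.

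\emph{From $\varrho_V^{(k)}$ to $L_V^{(k)}$.} If $\varrho_V^{(k)}$ is contractive, we argue as follows. Given $P$ with $\|P\|_\infty\le 1$, its derivative data $(P_1,\dots,P_m)$ appear as the off-diagonal block of $\varrho_V^{(k)}(P)$. The norm of a block upper triangular matrix dominates the norm of its corner, so
\[
\Big\|\sum_j P_j\otimes V_j\Big\|\le \|\varrho_V^{(k)}(P)\|\le 1 .
\]

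\emph{From $L_V^{(k)}$ to $\varrho_V^{(k)}$.} This is the zero lemma, and I will reduce it to the case $P(0)=0$. Fix $P\in\mathcal M_k\otimes \operatorname{H}^\infty(\mathbb B)$ with $\|P\|_\infty\le1$ and set $A:=P(0)$, so $\|A\|\le 1$. First suppose $\|A\|<1$, treating $\|A\|=1$ by a limiting argument on $tP$ with $t\uparrow 1$. Compose $P$ with the matrix Möbius map
\[
\varphi_A(X)=(I-AA^*)^{-1/2}(X-A)(I-A^*X)^{-1}(I-A^*A)^{1/2}.
\]
This map carries the closed unit ball of $\mathcal M_k$ onto itself and sends $A$ to $0$, so $Q:=\varphi_A\circ P$ satisfies $\|Q\|_\infty\le 1$ and $Q(0)=0$. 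Its derivative at $0$ is
\[
DQ(0)=(I-AA^*)^{-1/2}\,DP(0)\,(I-A^*A)^{-1/2}.
\]
The map $\varrho_V^{(k)}$ is a homomorphism, and $\varphi_A$ is a rational function holomorphic near the ball. Hence $\varrho_V^{(k)}(Q)=\varphi_A(\varrho_V^{(k)}(P))$, with the tensor factors of $I_n$ inserted appropriately. The reduction then follows from the converse property of the Möbius map: $\|T\|\le 1$ if and only if $\|\varphi_A(T)\|\le 1$. So it suffices to prove $\|\varrho_V^{(k)}(Q)\|\le 1$. When $Q(0)=0$, $\varrho_V^{(k)}(Q)$ is simply the nilpotent matrix with corner $L_V^{(k)}(Q)$, whose norm equals $\|L_V^{(k)}(Q)\|\le 1$ by hypothesis.

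The \textbf{main obstacle} is the functional calculus identity $\varrho_V^{(k)}(\varphi_A\circ P)=\varphi_A(\varrho_V^{(k)}(P))$. This is not automatic, because $\varphi_A$ involves noncommutative products with the coefficient $A\otimes I_n$ and is not a scalar function. I would verify it directly: $\varrho_V^{(k)}(P)$ has the form $\begin{pmatrix} \tilde A & B\\ 0&\tilde A\end{pmatrix}$ with $\tilde A=A\otimes I_n$. Both sides can therefore be computed by the first-order expansion
\[
\varphi_A(\tilde A+N)=\varphi_A(\tilde A)+D\varphi_A(\tilde A)[N],
\]
which is exact since $N^2=0$. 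Its corner is $(I-AA^*)^{-1/2}B(I-A^*A)^{-1/2}$ with tensors, which matches the corner of $\varrho_V^{(k)}(Q)$ by the chain rule for $DQ(0)$. Alternatively, one can simply cite \cite[Lemma 3.3]{GM} and \cite[Lemma 1.6]{GMNS} for this step, as the paper indicates. Once the equivalence holds for every $k$, the lemma follows, since complete contractivity of $\varrho$ means contractivity of all $\varrho^{(k)}$, and likewise for $L_V$.
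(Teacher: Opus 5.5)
Your proof is correct and follows the same route as the paper, which calls the easy direction evident and cites the zero lemma for the other; you compose with the matrix M\"obius map $\varphi_A$ to reduce to $P(0)=0$, where $\varrho_V^{(k)}(Q)$ is nilpotent with corner $L_V^{(k)}(Q)$. One point to tighten: with $\hat A=I_2\otimes A\otimes I_n$, the first-order expansion of $\varphi_{\hat A}$ at $\hat A+N$ is exact not because $N^2=0$ but because $NDN=0$ for every block-diagonal $D$ and all coefficients generated by $\hat A,\hat A^*$ are block diagonal, and this same observation gives the invertibility of $I-\hat A^*\varrho_V^{(k)}(P)$ that the equivalence $\|T\|\le 1\iff\|\varphi_{\hat A}(T)\|\le 1$ requires.
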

A version of generalized Schwarz lemma (see, \cite[Proposition 14.5]{Vern}) shows that $\mathcal{M}_k \otimes \mathbb{C}^m$ equipped with these norms is exactly the MIN norm on $(\mathbb{C}^m, \|\cdot\|)^*$.  
To study which contractive linear maps are completely contractive, Paulsen introduced
in \cite{vip} the quantity
\begin{equation}\label{eq:alpha}
\alpha(X)=\|\mathrm{id}:\mathrm{MIN}(X)\to\mathrm{MAX}(X)\|_{cb}
\end{equation}
and proved the following.
\begin{prop}[Proposition 2.3, \cite{vip}]Let $\mathbb{B}$ be any unit ball in $\mathbb{C}^m$ with respect to some norm $\|\cdot\|$, $\mathbb{B}^*$ be the unit ball of $(\mathbb{C}^m,\|\cdot\|)^*$. Then 
\[\alpha(X)=\sup \bigg \{\Big \|\sum_{i=1}^m P_i \otimes V_i\Big \|:\Big \|\sum_{i=1}^m \lambda_i V_i\Big \| \leq 1,\Big\|\sum_{i=1}^m \mu_i P_i\Big\| \leq 1, \lambda \in \mathbb{B},\mu \in \mathbb{B}^*\bigg \},\]
where $\lambda = (\lambda_1, \ldots, \lambda_m)$ and $\mu =(\mu_1, \ldots, \mu_m)$
\end{prop}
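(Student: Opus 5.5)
The plan is to unwind both sides of the definition $\alpha(X)=\|\mathrm{id}:\mathrm{MIN}(X)\to\mathrm{MAX}(X)\|_{cb}$, with $X=(\mathbb{C}^m,\|\cdot\|)$ and $e_1,\dots,e_m$ the standard basis. A general element of $\mathcal{M}_k(X)$ is written as $\sum_{i=1}^m P_i\otimes e_i$ with $P_i\in\mathcal{M}_k$. The cb-norm of the identity is then
\[
\sup_k\ \sup\Big\{\Big\|\sum_i P_i\otimes e_i\Big\|_{\mathcal{M}_k(\mathrm{MAX}(X))}:\ \Big\|\sum_i P_i\otimes e_i\Big\|_{\mathcal{M}_k(\mathrm{MIN}(X))}\le 1\Big\}.
\]
So it suffices to compute the two matrix norms explicitly.

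\textbf{The MIN side.} By definition, $\mathrm{MIN}(X)$ is the operator space structure coming from the embedding of $X$ into $C(\mathbb{B}^*)$, the continuous functions on the unit ball of $X^*$. Hence
\[
\Big\|\sum_i P_i\otimes e_i\Big\|_{\mathrm{MIN}}=\sup_{f\in \mathbb{B}^*}\Big\|\sum_i f(e_i)P_i\Big\| .
\]
Writing $\mu_i=f(e_i)$, the vector $\mu=(\mu_1,\dots,\mu_m)$ ranges exactly over $\mathbb{B}^*$ in the dual coordinates. So the constraint on $P$ is precisely $\|\sum_i\mu_iP_i\|\le1$ for all $\mu\in\mathbb{B}^*$.

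\textbf{The MAX side.} I would use the standard description of the MAX norm: $\|x\|_{\mathcal{M}_k(\mathrm{MAX}(X))}=\sup\|(\mathrm{id}_k\otimes\varphi)(x)\|$, the supremum taken over all contractive linear maps $\varphi:X\to B(H)$. This holds because every contraction out of $\mathrm{MAX}(X)$ is completely contractive, and because $\mathrm{MAX}$ is by definition the largest operator space structure extending the norm. Set $V_i=\varphi(e_i)$. Then $\varphi$ is contractive iff $\|\sum_i\lambda_iV_i\|\le1$ for all $\lambda\in\mathbb{B}$, and $(\mathrm{id}\otimes\varphi)(\sum P_i\otimes e_i)=\sum P_i\otimes V_i$.

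\textbf{Combining the two.} Substituting both descriptions turns the double supremum defining $\alpha(X)$ literally into the displayed supremum.

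\textbf{Main obstacle.} The one step needing care is the MAX description, in particular reducing from arbitrary $B(H)$ to matrices $V_i\in\mathcal{M}_n$. For this I would compress: given $\varphi$ into $B(H)$ and $x\in\mathcal{M}_k(X)$, pick unit vectors nearly attaining $\|(\mathrm{id}\otimes\varphi)(x)\|$ in $\mathbb{C}^k\otimes H$. These involve only finitely many vectors of $H$, so let $Q$ be the projection onto the finite-dimensional span of those vectors. Then $Q\varphi(\cdot)Q$ is still contractive, takes values in $\mathcal{M}_n$, and loses nothing in the norm of $x$. Finally, the supremum over $k$ and $n$ is harmless, since both sides are defined as suprema over all matrix levels.
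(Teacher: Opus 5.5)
The paper gives no proof of this proposition; it quotes it from Paulsen's Proposition~2.3. Your argument is the standard proof and is correct: you identify the $\mathrm{MIN}$ norm through the unit ball of $X^*$, write the $\mathrm{MAX}$ norm as a supremum over contractions $\varphi\colon X\to B(H)$, compress to finite-dimensional subspaces to get matrices $V_i$, and substitute both descriptions into the definition of $\|\mathrm{id}\colon\mathrm{MIN}(X)\to\mathrm{MAX}(X)\|_{cb}$.

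One point to make explicit. Your first reason, that contractions out of $\mathrm{MAX}(X)$ are completely contractive, gives only $\sup_\varphi\|(\mathrm{id}_k\otimes\varphi)(x)\|\le\|x\|_{\mathcal{M}_k(\mathrm{MAX}(X))}$. The reverse inequality does not come from maximality of $\mathrm{MAX}$ as such. It comes from the fact that $\mathrm{MAX}(X)$ is realised by a complete isometry into some $B(H)$, and that map is itself one of the admissible contractions $\varphi$.
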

Fix a pair $a_1, a_2$ of complex numbers such that $|a_1|\leq 1, |a_2| \leq  1$ and $|a_1|+|a_2| > 1$. Let  
\begin{equation} \label{eq:Defn-norm}
\|(z_1,z_2)\|_A = \max\{|z_1|, |z_2|, a_1z_!+a_2z_2|\}.
\end{equation}
Write $a_j=r_j e^{i\theta_j}$, and put $r=r_1$, $s=r_2$. The coordinate change $w_j=e^{i\theta_j}z_j$ reduces the norm $\|\cdot\|_A$ to
\[
\|(z_1,z_2)\|_{r,s}:
=\max\{|z_1|,\ |z_2|,\ |rz_1+sz_2|\}.
\]
By hypothesis, $0<r,s\leq 1$, $r+s>1$. In what follows, we let $E_{r,s}$ denote the normed linear space $(\mathbb{C}^2, \|\cdot\|_{r,s})$. 
\begin{thm} \label{mainthm}
Let $0<r,s\leq1$ with $r+s>1$, and let $Y\in \mathcal{M}_n$ satisfy
$\|Y\|\leq1$ and $\|rI+sY\|\leq 1$. Suppose that
\begin{equation}\label{eq:spanning}
\ker(I-Y^*Y)+\ker\bigl(I-(rI+sY)^*(rI+sY)\bigr)=\mathbb{C}^n,
\end{equation}
equivalently, that $\mathcal{H}=\mathbb{C}^n$ has a basis $h_1,\ldots,h_n$ such that for each
$j$ at least one of
$\|Yh_j\|=\|h_j\|$, $\|(rI+sY)h_j\|=\|h_j\|$ holds. Then
\[
L_{(I,Y)}:\operatorname{MIN}(E_{r,s}^*)\longrightarrow \mathcal{M}_n
\]
is completely contractive if and only if $Y$ is normal.
\end{thm}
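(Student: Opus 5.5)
The plan is to prove the two implications separately. The forward direction (complete contractivity implies normality) goes through a dilation of $L_{(I,Y)}$ to commuting normal operators, and the spanning hypothesis \eqref{eq:spanning} will be used to show that $Y$ is a compression of a normal operator to an invariant subspace.

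\emph{Normal $\Rightarrow$ completely contractive.} If $Y$ is normal, write $Y=U\operatorname{diag}(\lambda_1,\ldots,\lambda_n)U^*$ with $U$ unitary. The hypotheses $\|Y\|\le1$ and $\|rI+sY\|\le1$ give $|\lambda_j|\le1$ and $|r+s\lambda_j|\le1$. Up to unitary conjugation, $L_{(I,Y)}$ is then the direct sum of the scalar maps $(\hat\alpha_1,\hat\alpha_2)\mapsto\hat\alpha_1+\lambda_j\hat\alpha_2$. By Lemma~\ref{lem:dual-ball-and-contraction}(ii) with $\mathcal E=\mathbb C$, each of these is contractive. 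A contractive map into the commutative $C^*$-algebra $\ell_\infty^n\subset\mathcal M_n$ is automatically completely contractive, so $L_{(I,Y)}$ is completely contractive.

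\emph{Completely contractive $\Rightarrow$ normal.}

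\textbf{Step 1: dilation.} Realise $\operatorname{MIN}(E_{r,s}^*)$ inside $C(K)$, where $K$ is the closed unit ball of $E_{r,s}$ and $z_j$ is the coordinate function. By the Arveson--Wittstock extension theorem and the Stinespring/Paulsen representation of completely contractive maps on a $C^*$-algebra, there are a Hilbert space $\mathcal K$, a $*$-representation $\pi$ of $C(K)$ on $\mathcal K$, and contractions $V_1,V_2:\mathbb C^n\to\mathcal K$ such that
\[
I=V_1^*N_1V_2,\qquad Y=V_1^*N_2V_2,\qquad N_j=\pi(z_j).
\]
The operators $N_1,N_2$ commute, are normal, and have joint spectrum in $K$. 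Consequently $N_1$, $N_2$, $rN_1+sN_2$, and also $M:=N_1^*N_2$ and $P:=N_1^*N_1$, are contractions. (This is also where the paper's ``dilation theorem'' would enter; either route should work.)

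\textbf{Step 2: equality in $I=V_1^*N_1V_2$.} For a unit vector $h$ we have $1=\langle N_1V_2h,V_1h\rangle$. Equality in Cauchy--Schwarz forces $V_1h=N_1V_2h$ with all norms equal to $1$. Hence $V:=V_2$ is an isometry, $V_1=N_1V$, and $PV=V$. This yields
\[
Y=V^*MV,\qquad rI+sY=V^*(rP+sM)V=V^*N_1^*(rN_1+sN_2)V,
\]
and both $M$ and $M':=rP+sM$ are contractions.

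\textbf{Step 3: the spanning hypothesis.} Take $h\in\ker(I-Y^*Y)$. Then $\|V^*MVh\|=\|h\|$ while $\|MVh\|\le\|h\|$, so $MVh\in\operatorname{ran}V$ and $MVh=VYh$. Next take $h\in\ker\bigl(I-(rI+sY)^*(rI+sY)\bigr)$. The same argument applied to $M'$ gives $M'Vh=V(rI+sY)h$. Since $PVh=Vh$, this reads $rVh+sMVh=rVh+sVYh$, and since $s>0$ we get $MVh=VYh$ again. By \eqref{eq:spanning} these vectors span $\mathbb C^n$, so
\[
MV=VY.
\]
Thus $W:=\operatorname{ran}V$ is a finite-dimensional $M$-invariant subspace, and $Y$ is unitarily equivalent to $M|_W$.

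\textbf{Step 4: conclusion.} $M$ is normal, being a product of commuting normal operators. The restriction $M|_W$ has an eigenvector $w$, and $w$ is then an eigenvector of the normal operator $M$. Since eigenspaces of a normal operator are reducing, $w$ is also an eigenvector of $M^*$, so $\mathbb Cw$ reduces $M$. Now pass to $W\ominus\mathbb Cw$, which is again $M$-invariant, and induct on $\dim W$. This shows that $W$ reduces $M$, hence $M|_W$ is normal, and therefore $Y$ is normal.

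The main obstacle I expect is Step~1: one must set up the dilation correctly for a map defined only on the operator system spanned by the coordinates in $C(K)$ (the constant is absent). Care is needed to get a two-sided representation $V_1^*\pi(\cdot)V_2$, rather than a symmetric one, with commuting normal $N_j$. Once that is in place, Steps~2--4 are equality-case arguments for contractions.
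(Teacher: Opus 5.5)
Your proof is correct. The converse is essentially the paper's: automatic complete contractivity of maps into $\ell_\infty^n$ replaces the paper's direct estimate $\max_j\|S+\lambda_jT\|$. The forward direction, however, follows a genuinely different route.

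The paper proceeds in four stages:
\begin{enumerate}
\item It transfers complete contractivity from $L_{(I,Y)}$ to the Parrott-type homomorphism $\varrho_{(I,Y)}$, using Lemma~\ref{lem:rho-vs-L}.
\item It applies Arveson's boundary dilation theorem. This gives a single isometry $W$, hence isometries $J_1,J_2$, and commuting normals $N_1,N_2$.
\item It cuts down to $\mathcal K_0=\ker(I-N_1^*N_1)$. Fuglede's theorem shows that $\mathcal K_0$ reduces $N_1,N_2$ and that $N_1|_{\mathcal K_0}$ is unitary.
\item Only then can it assert that $N=(N_1^*N_2)|_{\mathcal K_0}$ is normal with $rI+sN$ contractive.
\end{enumerate}

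You instead factor $L_{(I,Y)}$ itself, using Wittstock's extension theorem and the Haagerup--Paulsen--Wittstock factorization. This is in the spirit of the factorization alternative the paper mentions in its introduction but does not carry out.

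The obstacle you anticipate in Step~1 is not real. Wittstock's theorem extends completely contractive maps from arbitrary operator subspaces of $C(K)$, so the missing constants are irrelevant. The factorization theorem then gives exactly the two-sided form $V_1^*\pi(\cdot)V_2$, with $\|V_1\|,\|V_2\|\le 1$ after rescaling.

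Your weaker a priori data (contractions rather than isometries) is upgraded by the same Cauchy--Schwarz equality argument the paper uses to obtain $N_1J_2=J_1$. The passage to $\mathcal K_0$ then disappears. Both $M=\pi(\bar z_1z_2)$ and $rP+sM=\pi\bigl(\bar z_1(rz_1+sz_2)\bigr)$ are images of elements of the commutative algebra $C(K)$ with sup norm at most $1$, so their normality and contractivity are automatic. The relation $PV=V$ supplies both the compression identity $V^*(rP+sM)V=rI+sY$ and the cancellation in Step~3. Your eigenvector induction in Step~4 is equivalent to the paper's trace argument.

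What the paper's route buys is a statement inside the category of homomorphisms, namely an explicit commuting normal dilation of $\varrho_{(I,Y)}$ with joint spectrum in the Shilov boundary; this is its stated motivation. Yours is shorter and dispenses with Lemma~\ref{lem:rho-vs-L}, the boundary dilation theorem, and Fuglede's theorem.
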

\begin{proof}
    Put
\[
T_1=\begin{pmatrix}0&I_{\mathcal{H}}\\0&0\end{pmatrix},
\qquad
T_2=\begin{pmatrix}0&Y\\0&0\end{pmatrix}
\]
on $\mathcal{H}\oplus \mathcal{H}$. Let $K$ be the closed unit ball of $E_{r,s}$ and
$P(K)$ the uniform closure of the polynomials on $K$. Because $T_iT_j=0$ for
$i,j=1,2$, the induced homomorphism is
\[
\varrho_{(I,Y)}(p)=
\begin{pmatrix}
p(0)I&
\partial_1p(0)I+\partial_2p(0)Y\\
0&p(0)I
\end{pmatrix},\quad p\in P(K).
\]

It is contractive if and only if the corresponding linear map is contractive, which by
Corollary~\ref{cor:diagonal} is precisely the condition
$\max\{\|I\|,\|Y\|,\|rI+sY\|\}\leq 1$.
 
Suppose that $\varrho_{(I,Y)}$ is completely contractive.
Then Arveson's boundary dilation theorem provides a commuting pair of normal operators
$N_1,N_2$ on a Hilbert space $\mathcal{K} \supseteq \mathcal{H}\oplus \mathcal{H}$ and
an isometry $W:\mathcal{H}\oplus \mathcal{H}\to\mathcal{K}$ with
\[
\varrho_{(I,Y)}(p)=W^*p(N_1,N_2)W, \qquad
\sigma(N_1,N_2)\subseteq\partial_S P(K)\subseteq K,
\]
where $\partial_S P(K)$ denotes the Shilov boundary \cite[Corollary 7.7]{Vern}.
Define $i_1, i_2:\mathcal{H} \to \mathcal{H} \oplus \mathcal{H}$ by
$i_1(h) = (h,0)$ and $i_2(h)=(0,h)$, and put
\[
J_1h=W(h,0),\qquad J_2h=W(0,h), \quad h\in \mathcal{H},
\]
so that $J_1= W\circ i_1$ and $J_2=W\circ i_2$. Consequently
$J_1^* J_1 = i_1^* W^* W i_1 = i_1^*i_1 = I_{\mathcal{H}}$, likewise
$J_2^*J_2=I_{\mathcal{H}}$, and $J_1^*J_2 = i_1^* i_2=0$.
Note $W(h,k)=J_1h+J_2k$, whence
\[W^*N_jW= \begin{pmatrix}
J_1^*N_jJ_1&J_1^*N_jJ_2\\
J_2^*N_jJ_1&J_2^*N_jJ_2
\end{pmatrix}, \quad j=1,2,
\]
and $W^*N_jW=T_j$, $j=1,2$, gives
\[
J_1^*N_1J_2=I,\qquad J_1^*N_2J_2=Y.
\]
For any $p\in P(K),$ we have, by the spectral mapping theorem for commuting normals,
\[
    \|p(N_1,N_2)\| = \sup\{|p(z)|: z\in \sigma(N_1, N_2)\}
    \leq \sup\{|p(z)|: z\in K\}.
\]
Taking $p=z_1$, $p=z_2$ and $p=rz_1+sz_2$ shows that $N_1$, $N_2$ and $r N_1 + s N_2$
are contractions.
 
From $J_1^*N_1J_2=I,$ we obtain
\begin{equation} \label{NandJ}
N_1J_2=J_1 \qquad\text{and}\qquad N_1^*N_1J_2=J_2.
\end{equation}
For the first, using $\|N_1J_2h\|\leq\|J_2h\|=\|h\|$, $\|J_1h\|=\|h\|$ and
$\langle J_1^*N_1J_2h,h\rangle=\|h\|^2$,
\begin{align*}
    \|N_1J_2h-J_1h\|^2 &= \|N_1J_2h\|^2 + \|J_1h\|^2 - 2 \operatorname{Re} \langle J_1^*N_1J_2 h , h \rangle\\
    &\leq \|h\|^2 + \|h\|^2 - 2\|h\|^2 = 0.
\end{align*}
Hence $\|N_1J_2h\|=\|J_1h\|=\|h\|=\|J_2h\|$ for every $h$, and since $I-N_1^*N_1\geq0$,
\[
\bigl\|(I-N_1^*N_1)^{1/2}J_2h\bigr\|^2 = \|J_2h\|^2-\|N_1J_2h\|^2 =0 ,
\]
so $(I-N_1^*N_1)J_2h=0$ for every $h$, which is the second identity in \eqref{NandJ}.
Put $D=I-N_1^* N_1$ and $\mathcal K_0=\ker D$.
 
\begin{theorem*}\label{claim:K0} \begin{itemize}[leftmargin=\dimexpr 24pt]
\item[\rm(i)] $\mathcal K_0$ is a common reducing subspace of $N_1, N_2$;
\item[\rm(ii)] $U:=N_1|_{\mathcal K_0}$ is unitary on $\mathcal K_0$;
\item[\rm(iii)] $N:=(N_1^*N_2)|_{\mathcal K_0}$ is normal, with $\|N\|\leq 1$ and
$\|rI+sN\|\leq 1$.
\end{itemize}
\end{theorem*}
 
\noindent\textit{Proof of \textup{(i)}.} Since $N_j N_1=N_1 N_j$ and $N_1$ is normal,
Fuglede's theorem gives $N_j N_1^* = N_1^* N_j$, $j=1,2$. Thus $N_j$ commutes with both
factors of $N_1^* N_1$, so
\[
D N_j =\left(I-N_1^* N_1\right) N_j = N_j-N_j N_1^* N_1 = N_j D .
\]
Because $D=D^*$, taking adjoint shows that $D$ commutes with $N_j^*$ as well.
Consequently, if $D h=0$ then $D N_j h=N_j D h=0$ and $D N_j^* h=N_j^* D h=0$, so
$\mathcal{K}_0$ is invariant under $N_j$ and $N_j^*$ for $j=1,2$, that is, it reduces both.
 
\noindent\textit{Proof of \textup{(ii)}.} By the definition of $\mathcal{K}_0,$ we have
$U^* U=I_{\mathcal{K}_0}$, and normality of $N_1$ gives
$U U^*=\left(N_1 N_1^*\right)|_{\mathcal{K}_0}=\left(N_1^* N_1\right)|_{\mathcal{K}_0}=I_{\mathcal{K}_0}$.
Hence $U$ is unitary.
 
\noindent\textit{Proof of \textup{(iii)}.} Set $V=N_2|_{\mathcal{K}_0}$, a normal
contraction commuting with $U$ and $U^*$. Then $N=U^* V$ satisfies
$N^* N=V^* U U^* V=V^* V$ and $N N^*=U^* V V^* U=V V^*=V^* V$, so $N$ is normal.
Moreover $\|N\|\leq\|V\|\leq 1$, and
$rI+sN=U^*(rU+sV)=U^*\bigl((rN_1+sN_2)|_{\mathcal{K}_0}\bigr)$ is a contraction because
$rN_1+sN_2$ is one and $U^*$ is unitary. \hfill$\square$
 
The identity $N_1^* N_1 J_2=J_2$ gives $J_2 \mathcal{H} \subseteq \mathcal{K}_0$. Since
$\mathcal K_0$ reduces $N_1$ and $N_1J_2=J_1$, also
$J_1\mathcal{H} = N_1J_2\mathcal{H} \subseteq\mathcal K_0$. The ranges of $J_1$ and
$J_2$ are orthogonal, so
$W(\mathcal{H}\oplus \mathcal{H})=J_1\mathcal{H}\oplus J_2\mathcal{H}\subseteq\mathcal K_0$;
hence we may replace $\mathcal{K}$ by the reducing subspace $\mathcal{K}_0$ without
disturbing the dilation. Using $N_1J_2=J_1$ once more,
\[
J_2^* N J_2 =J_2^* N_1^* N_2 J_2 =\left(N_1 J_2\right)^* N_2 J_2 =J_1^* N_2 J_2 = Y .
\]
 
Write $J=J_2$ and $P=J J^*$, the orthogonal projection onto $J \mathcal{H}$. Let $h$ be
one of the basis vectors $h_j$ furnished by the hypothesis.
 
If $\|Yh\|=\|h\|$, then, since $PNJh=JJ^*NJh=J(Yh)$ and $\|N\|\leq1$,
\[\|h\|=\|Y h\|=\|P N J h\| \leq\|N J h\| \leq\|h\| .\]
Equality throughout gives $\|PNJh\| = \|NJh\|$, so $NJh \in J \mathcal{H}$ and
$N J h=J Y h$.
 
If instead $\|(rI+sY)h\|=\|h\|$, apply the same argument to the contraction $rI+sN$,
whose compression is $J^*(rI+sN)J=rI+sY$:
\[
 \|h\|=\|P(r I+s N) J h\| \leq\|(r I+s N) J h\| \leq\|h\| .
\]
Hence $(r I+s N) J h \in J \mathcal{H}$ and
\[
(rI+sN)Jh= P(rI+sN)Jh = J(rI+sY)h .
\]
Expanding, $rJh+sNJh=rJh+sJYh$; since $s>0$, again $NJh=JYh$.
 
Thus $NJh_j=JYh_j$ for every $j$, and by linearity $NJh=JYh$ for every
$h\in \mathcal{H}$. Therefore $N(J\mathcal{H})\subseteq J\mathcal{H}$, so $J\mathcal{H}$
is invariant under $N$.
 
A finite-dimensional invariant subspace of a normal operator is reducing. Indeed,
represent a normal operator $R$ as $\begin{pmatrix} A & B \\0 & C \end{pmatrix}$
relative to an orthogonal decomposition $F\oplus G$ with $F$ finite dimensional and
invariant. Comparing the $(1,1)$ entries of $R^*R$ and $RR^*$ gives
$A^*A=AA^* + BB^*$ on $F$, and taking traces,
\[
\operatorname{tr}\left(B B^*\right)=\operatorname{tr}\left(A^* A\right)-\operatorname{tr}\left(A A^*\right)=0 .
\]
Since $B B^* \geq 0$, this forces $B=0$, so $F$ reduces $R$. Applying this with $R=N$
and $F=J\mathcal{H}$, the subspace $J\mathcal{H}$ reduces $N$.
 
Since $J:\mathcal H\to J\mathcal{H}$ is unitary, $Y=J^*(N|_{J\mathcal{H}})J$ is
unitarily equivalent to $N|_{J\mathcal{H}}$, which is normal. Hence $Y$ is normal.
 
For the converse, suppose $Y$ is normal, with eigenvalues $\lambda_1,\ldots,\lambda_n$.
Since $\|Y\| \leq 1$ and $\|rI + sY\| \leq 1$,
\[
|\lambda_j|\leq1,\qquad |r+s\lambda_j|\leq1, \qquad 1\leq j \leq n,
\]
so $(1,\lambda_j)\in B_{E_{r,s}}$ for every $j$. For arbitrary $S,T\in \mathcal{M}_k$,
unitary diagonalisation of $Y$ gives
\begin{align*}
\|S\otimes I+T\otimes Y\|
&=\max_j\|S+\lambda_jT\|
\leq
\sup_{(z_1,z_2)\in B_{E_{r,s}}}
\|z_1S+z_2T\| .
\end{align*}
The right-hand side is precisely the norm of the corresponding element of
$\mathcal{M}_k(\operatorname{MIN}(E_{r,s}^*))$, since $(E_{r,s}^*)^*=E_{r,s}$. As this holds for every $k$,
$L_{(I,Y)}$ is completely contractive, equivalently (Lemma~\ref{lem:rho-vs-L})
$\varrho_{(I,Y)}$ is completely contractive.
\end{proof}
We state below Theorem \ref{mainthm} in an equivalent form using the extreme points of the set $\mathscr{V}_{r,s}$.
\begin{thm} \label{mainthmext}
Let $0<r,s\leq1$ with $r+s>1$, and let $Y\in \mathcal{M}_n$ satisfy $\|Y\|\leq1$ and
$\|rI+sY\|\leq 1$. Suppose that $(I, Y)$ is an extreme point of
$\mathscr{V}_{r, s}(n)$. Then
\[
L_{(I, Y)}: \operatorname{MIN}\left(E_{r, s}^*\right) \longrightarrow \mathcal{M}_n
\]
is completely contractive if and only if $Y$ is normal.
\end{thm}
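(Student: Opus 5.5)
The plan is to deduce Theorem~\ref{mainthmext} from Theorem~\ref{mainthm}. It suffices to show that extremality of $(I,Y)$ in $\mathscr V_{r,s}(n)$ forces the spanning condition \eqref{eq:spanning}. Once that is done, both implications (complete contractivity $\Rightarrow$ $Y$ normal, and the converse) are exactly those of Theorem~\ref{mainthm}. In fact the converse direction does not even need \eqref{eq:spanning}, as its proof shows.

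For the spanning condition, write $V_1=I$, $V_2=Y$ and $V_3=rI+sY$. Since $I$ is unitary, $E_1=F_1=I$, so by Proposition~\ref{prop:tangent} every admissible perturbation has $\Delta_1=0$. Consequently $\Delta_3=s\Delta_2$, and since $s>0$ the tangent cone is
\[
\mathcal T_{(I,Y)}=\{(0,\Delta): \Delta E_2=0,\ F_2\Delta=0,\ \Delta E_3=0,\ F_3\Delta=0\}.
\]
Here $\operatorname{ran}E_2=\ker(I-Y^*Y)$ and $\operatorname{ran}E_3=\ker(I-V_3^*V_3)$. We argue by contraposition. Suppose $\mathcal M:=\operatorname{ran}E_2+\operatorname{ran}E_3\neq\mathbb C^n$. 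Pick a unit vector $u\perp\mathcal M$, and consider
\[
\Delta:=v\otimes u^*,\qquad \text{i.e. } \Delta x=\langle x,u\rangle v,
\]
where $v$ is to be chosen. Then $\Delta E_2=0=\Delta E_3$ automatically. It remains to choose $v\neq0$ with $F_2v=0=F_3v$, that is, $v\perp \operatorname{ran}F_2+\operatorname{ran}F_3$.

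The dimension count works as follows. We have $\dim\operatorname{ran}F_i=\dim\operatorname{ran}E_i$, and $\operatorname{ran}F_2=Y(\operatorname{ran}E_2)$, $\operatorname{ran}F_3=V_3(\operatorname{ran}E_3)$. The main obstacle is to show that $\dim(\operatorname{ran}F_2+\operatorname{ran}F_3)\le\dim\mathcal M<n$. The point to exploit is that $Y$ and $V_3$ are related by $V_3=rI+sY$. My first attempt would be to observe that on $\operatorname{ran}E_2\cap\operatorname{ran}E_3$ both $Y$ and $rI+sY$ are isometric. For such $x$, write $\|Yx\|=\|x\|$ and $\|rx+sYx\|=\|x\|$ with $r+s>1$, $r,s\le 1$. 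Strict convexity of the Euclidean norm then forces $Yx=x$ when $r+s=1$; but in general $r+s>1$, so the conclusion is instead a contradiction unless $x=0$. Indeed, $\|rx+sYx\|\ge$ well, one must use $\|rI+sY\|\le1$ and the triangle inequality more carefully. The alternative route, which I would adopt if this fails, is to avoid the dimension count altogether: use $V_3$'s polar form $F_3V_3=V_3E_3$. Then the map $x\mapsto(Yx,V_3x)$ sends $\mathcal M$ onto a space containing $\operatorname{ran}F_2+\operatorname{ran}F_3$ only if intersections behave, so I would directly show
\[
\dim(\operatorname{ran}F_2+\operatorname{ran}F_3)=\dim\operatorname{ran}F_2+\dim\operatorname{ran}F_3-\dim(\operatorname{ran}F_2\cap\operatorname{ran}F_3).
\]
I would then compare intersections by showing that $\operatorname{ran}F_2\cap\operatorname{ran}F_3$ and $\operatorname{ran}E_2\cap\operatorname{ran}E_3$ have equal dimension. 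Applying the same analysis to the adjoint pair $(I,Y^*)$, with $V_3^*=rI+sY^*$, handles this, since $F$-projections of $Y$ are $E$-projections of $Y^*$.

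Given the dimension inequality, $v$ exists. Then $(0,\Delta)$ is a nonzero element of the tangent cone, so $(I,Y)$ is not extreme by Theorem~\ref{thm:3.8}. This is a contradiction, so \eqref{eq:spanning} holds, and Theorem~\ref{mainthm} applies.
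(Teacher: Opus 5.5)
\textbf{There is a genuine gap, exactly at the step you flag as the main obstacle.} Your outline is sound. If $M:=\operatorname{ran}E_2+\operatorname{ran}E_3\neq\mathbb C^n$, the rank-one $\Delta=v\otimes u^*$ is a nonzero admissible perturbation, provided some $v\neq0$ is orthogonal to $M':=\operatorname{ran}F_2+\operatorname{ran}F_3$, that is, provided $M'\neq\mathbb C^n$. But you never prove $\dim M'\le\dim M$.

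\begin{itemize}
\item Your first attempt tries to show $\operatorname{ran}E_2\cap\operatorname{ran}E_3=\{0\}$, and this is false. Since $|r-s|<1<r+s$, there is a unimodular $\lambda$ with $|r+s\lambda|=1$. For $Y=\lambda I$ both kernels are all of $\mathbb C^n$.
\item Your second attempt only restates what is needed, namely that the two intersections have equal dimension. The appeal to the adjoint pair $(I,Y^*)$ merely turns one inequality into the other; neither inequality is established.
\end{itemize}

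As written, nothing excludes the case $M\neq\mathbb C^n=M'$. In that case $(I,Y)$ is extreme but \eqref{eq:spanning} fails, and Theorem~\ref{mainthm} cannot be invoked directly.

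\textbf{The missing step is a short computation.} Let $x\in\operatorname{ran}E_2\cap\operatorname{ran}E_3$ be a unit vector and put $y:=Yx$. Then $\|y\|=1$ and $Y^*y=Y^*Yx=x$, so $V_3^*y=ry+sx$. Since $r,s$ are real and $\|x\|=\|y\|$,
\[
\|V_3^*y\|^2=r^2+s^2+2rs\operatorname{Re}\langle Yx,x\rangle=\|V_3x\|^2=1 .
\]
As $V_3^*$ is a contraction, this gives $y\in\ker(I-V_3V_3^*)=\operatorname{ran}F_3$. Also $YY^*y=YY^*Yx=y$, so $y\in\operatorname{ran}F_2$.

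Hence the isometry $Y|_{\operatorname{ran}E_2}$ maps $\operatorname{ran}E_2\cap\operatorname{ran}E_3$ injectively into $\operatorname{ran}F_2\cap\operatorname{ran}F_3$. The same argument applied to $Y^*$ gives the reverse injection, since the projections of $Y^*$ are those of $Y$ with $E$ and $F$ interchanged and $V_3^*=rI+sY^*$. Therefore $\dim M=\dim M'$, and your argument closes.

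\textbf{Comparison with the paper.} The paper never compares $M$ and $M'$. It notes that $\dim\mathcal T_{(I,Y)}=(n-\dim M)(n-\dim M')$, so extremality means $M=\mathbb C^n$ or $M'=\mathbb C^n$. It handles the second case by applying Theorem~\ref{mainthm} to $Y^*$. This uses that normality and all matricial norms of $L_{(I,Y)}$ are unchanged under $Y\mapsto Y^*$, the latter because $B_{E_{r,s}}$ is invariant under conjugation. Your repaired route shows the two cases coincide. So extremality of $(I,Y)$ is equivalent to \eqref{eq:spanning}, and no transport to adjoints is needed.
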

\begin{proof}
Write $V_3=rI+sY$ and
\[
M:=\ker(I-Y^*Y)+\ker(I-V_3^*V_3),\qquad
M':=\ker(I-YY^*)+\ker(I-V_3V_3^*),
\]
so that $M=\operatorname{ran} E_2+\operatorname{ran} E_3$ and $M'=\operatorname{ran} F_2+\operatorname{ran} F_3$ in the notation of
Subsection~\ref{sec:extremeV}. Since $V_1=I$ is unitary we have $E_1=F_1=I$, hence
$\Delta_1=0$ for every admissible perturbation, and $\Delta_3=s\Delta_2$ with $s>0$.
Therefore, by Proposition~\ref{prop:tangent}, writing $\Delta=\Delta_2$,
\[
\mathcal{T}_{(I,Y)}\cong\bigl\{\Delta\in\mathcal{M}_n:\ \Delta|_{M}=0,\ \operatorname{ran}\Delta\subseteq (M')^{\perp}\bigr\},
\]
a space naturally isomorphic to the space of linear maps $\mathbb{C}^n/M\to (M')^{\perp}$.
Hence
\[
\dim_{\mathbb{C}} \mathcal{T}_{(I,Y)}=(n-\dim M)(n-\dim M'),
\]
and by Theorem~\ref{thm:3.8} the pair $(I,Y)$ is extreme in $\mathscr{V}_{r,s}(n)$ if
and only if $M=\mathbb{C}^n$ \emph{or} $M'=\mathbb{C}^n$.
 
If $M=\mathbb{C}^n$, then hypothesis \eqref{eq:spanning} of Theorem~\ref{mainthm} holds and
the conclusion follows at once.
 
If instead $M'=\mathbb{C}^n$, we pass to adjoints. Since $r,s$ are real,
$(rI+sY)^*=rI+sY^*$, so $\|Y^*\|\leq 1$ and $\|rI+sY^*\|\leq1$, and
$M'$ is exactly the subspace $M$ formed from $Y^*$; thus $Y^*$ satisfies
\eqref{eq:spanning} and Theorem~\ref{mainthm} applies to $Y^*$. It remains to observe
two symmetries. First, $Y$ is normal if and only if $Y^*$ is. Second, $L_{(I,Y)}$ is
completely contractive if and only if $L_{(I,Y^*)}$ is: for $S,T\in\mathcal{M}_k$,
\[
\|S\otimes I+T\otimes Y\| = \|S^*\otimes I+T^*\otimes Y^*\| ,
\]
while $B_{E_{r,s}}$ is invariant under $z\mapsto\bar z$ (again because $r,s$ are
real), so that
\[
\sup_{z\in B_{E_{r,s}}}\|z_1S^*+z_2T^*\|=\sup_{z\in B_{E_{r,s}}}\|z_1S+z_2T\| ,
\]
that is, 
$\bigl\|L_{(I,Y)}^{(k)}\bigr\| = \bigl\|L_{(I,Y^*)}^{(k)}\bigr\|$, $k\geq 1$ and hence the two maps have the same matricial norms at every level. Applying
Theorem~\ref{mainthm} to $Y^*$ and transporting the conclusion back gives the result.
\end{proof}

\section{A counterexample} \label{sec:counterexample}
We can now record the counterexample. Recall the standing parameter assumptions
\[
\left|a_1\right| \leq 1, \quad\left|a_2\right| \leq 1, \quad\left|a_1\right|+\left|a_2\right|>1 .
\]
\begin{thm}\label{thm:counterexample}
For every pair of reals $0<r,s\leq 1$ with $r+s>1$ there is a contractive linear map
\[
L_{(I,Y_t)}:\operatorname{MIN}(E_{r,s}^*)\longrightarrow \mathcal{M}_2
\]
that is not completely contractive. Consequently $\alpha(E_{r,s}^*)>1$.
\end{thm}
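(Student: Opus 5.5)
The plan is to combine Lemma~\ref{existnonnormalext} with Theorem~\ref{mainthmext} (or, more directly, with Theorem~\ref{mainthm}). Fix $0<r,s\leq 1$ with $r+s>1$. By Lemma~\ref{existnonnormalext}, choose $t$ satisfying \eqref{choice of t} and set $Y_t=\begin{pmatrix} t&1-t^2\\0&t\end{pmatrix}$. For this matrix we have $\|Y_t\|=1$ and $rI+sY_t=Y_c$ with $c=r+st$ and $\|Y_c\|=1$. Hence $(I,Y_t)\in\mathscr V_{r,s}(2)$, and $(I,Y_t)$ is an extreme point of that set.

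\textbf{Contractivity.} By Lemma~\ref{lem:dual-ball-and-contraction}(ii), or equivalently Corollary~\ref{cor:diagonal}, the map $L_{(I,Y_t)}$ is contractive because $\max\{\|I\|,\|Y_t\|,\|rI+sY_t\|\}\leq 1$. Since $\operatorname{MIN}(E_{r,s}^*)$ carries the norm of $E_{r,s}^*$ at the first matrix level, this is contractivity on $\operatorname{MIN}(E_{r,s}^*)$.

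\textbf{Failure of complete contractivity.} Theorem~\ref{mainthmext} applies with $n=2$ and $Y=Y_t$, so $L_{(I,Y_t)}$ is completely contractive if and only if $Y_t$ is normal. Alternatively, the proof of Lemma~\ref{existnonnormalext} already exhibits the vectors $e_t\in\ker(I-Y_t^*Y_t)$ and $e_c\in\ker(I-Y_c^*Y_c)$ spanning $\mathbb C^2$. This is hypothesis \eqref{eq:spanning}, so Theorem~\ref{mainthm} can be invoked directly.

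\textbf{Normality of $Y_t$.} It remains to check that $Y_t$ is not normal. A $2\times 2$ upper triangular matrix with equal diagonal entries is normal if and only if its off-diagonal entry vanishes. Since $|t|<1$, we have $1-t^2\neq0$, so $Y_t$ is not normal. Therefore $L_{(I,Y_t)}$ is contractive but not completely contractive.

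\textbf{The bound on $\alpha$.} Since $\operatorname{MAX}(E_{r,s}^*)$ is characterized by the property that every contraction from $E_{r,s}^*$ into $\mathcal M_n$ is completely contractive on it, the map $L_{(I,Y_t)}$ is a complete contraction from $\operatorname{MAX}(E_{r,s}^*)$. Factor it as $L_{(I,Y_t)}\circ \mathrm{id}$, where $\mathrm{id}:\operatorname{MIN}(E_{r,s}^*)\to\operatorname{MAX}(E_{r,s}^*)$. This gives
\[
1<\|L_{(I,Y_t)}\|_{cb,\operatorname{MIN}}\leq \|\mathrm{id}\|_{cb}=\alpha(E_{r,s}^*).
\]
Alternatively, one can use Paulsen's formula (Proposition 2.3 of \cite{vip}) with $V=(I,Y_t)$ and a test tuple $P$ of norm at most one in $\mathcal M_k(\operatorname{MIN})$ satisfying $\|\sum P_i\otimes V_i\|>1$.

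The only delicate point is this last identification, relating failure of complete contractivity on MIN to $\alpha>1$. It is standard, and all the substantive work has already been done in Theorem~\ref{mainthm} and Lemma~\ref{existnonnormalext}.
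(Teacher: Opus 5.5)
Your proposal is correct and follows essentially the same route as the paper: contractivity comes from Lemma~\ref{lem:dual-ball-and-contraction}(ii), and failure of complete contractivity comes from Theorem~\ref{mainthm}, applied with the spanning basis $\{e_t,e_c\}$ together with the non-normality of $Y_t$. Your factorization of $L_{(I,Y_t)}$ through $\mathrm{id}:\operatorname{MIN}(E_{r,s}^*)\to\operatorname{MAX}(E_{r,s}^*)$ just spells out the step the paper calls immediate from \eqref{eq:alpha}.
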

\begin{proof}
Lemma~\ref{existnonnormalext} produces a non-normal $Y_t\in \mathcal{M}_2$ with
$(I,Y_t)\in\mathscr{V}_{r,s}(2)$, so $L_{(I,Y_t)}$ is contractive by
Lemma~\ref{lem:dual-ball-and-contraction}(ii); and the norm-attaining vectors
$e_t$ of $Y_t$ and $e_c$ of $rI+sY_t$ are linearly independent, so that
$\{e_t,e_c\}$ is a basis of $\mathbb{C}^2$ of the kind required in
Theorem~\ref{mainthm}. Since $Y_t$ is not normal, Theorem~\ref{mainthm} shows that
$L_{(I,Y_t)}$ is not completely contractive. The last assertion is immediate from
\eqref{eq:alpha}.
\end{proof}
 
For complex coefficients $a_j=\left|a_j\right| e^{i \theta_j}$, let $Y_\star$ be the
matrix $Y_t$ of Lemma~\ref{existnonnormalext} for the parameters $r=|a_1|$, $s=|a_2|$,
and take $Y=e^{i\left(\theta_2-\theta_1\right)} Y_\star$.
Then
\[
a_1 I+a_2 Y=e^{i \theta_1}\left(r I+s Y_\star\right),
\]
and since multiplication by a unimodular scalar changes neither $|Y|$, nor $|Y^*|$, nor
normality, nor any of the three norms involved, contractivity, extremality and
non-normality are all preserved.
\begin{cor}\label{cor:main}
   Let $\mathcal S$ be a two-dimensional complex subspace of $\ell_\infty^3$. If $\mathcal S$ is not complex-linearly isometric to $\ell_\infty^2$, then $\alpha(\mathcal S)>1$.
\end{cor}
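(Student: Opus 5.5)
The plan is to reduce Corollary~\ref{cor:main} to Theorem~\ref{thm:counterexample} in three moves: normalise $\mathcal S$ to some $E_{r,s}$, pass from $E_{r,s}^*$ to $E_{r,s}$ by duality, and observe that everything is isometry invariant.

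\emph{Normalisation.} First apply the lemma of Section~2 characterising two-dimensional subspaces of $\ell_\infty^3$. Since $\mathcal S$ is not isometric to $\ell_\infty^2$, it is isometric to $(\mathbb C^2,\|\cdot\|_A)$ for some $a_1,a_2$ with $|a_j|\le 1$ and $|a_1|+|a_2|>1$. The diagonal unitary change of variables $w_j=e^{i\theta_j}z_j$ then shows that this space is isometric to $E_{r,s}$, where $r=|a_1|$, $s=|a_2|$, $0<r,s\le 1$ and $r+s>1$. The quantity $\alpha(X)=\|\mathrm{id}:\mathrm{MIN}(X)\to\mathrm{MAX}(X)\|_{cb}$ is invariant under linear isometries, because $\mathrm{MIN}$ and $\mathrm{MAX}$ are functorial for isometries. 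Hence $\alpha(\mathcal S)=\alpha(E_{r,s})$.

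\emph{Duality.} Theorem~\ref{thm:counterexample} gives $\alpha(E_{r,s}^*)>1$, so it remains to pass to the dual. I would use Paulsen's duality $\alpha(X)=\alpha(X^*)$ from \cite{vip}, which the paper already invokes in Remark~\ref{rem:notequal} in the form $\alpha(X_A)=\alpha(X_A^*)$. The standard reason is $\mathrm{MIN}(X)^*=\mathrm{MAX}(X^*)$ and $\mathrm{MAX}(X)^*=\mathrm{MIN}(X^*)$ completely isometrically. Consequently the adjoint of $\mathrm{id}:\mathrm{MIN}(X)\to\mathrm{MAX}(X)$ is $\mathrm{id}:\mathrm{MIN}(X^*)\to\mathrm{MAX}(X^*)$, and taking adjoints preserves cb norms in finite dimensions.

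If I wanted to avoid citing this duality, I would argue directly from Paulsen's formula (the Proposition quoted from \cite{vip}), which is symmetric in the roles of $(P,\mathbb B^*)$ and $(V,\mathbb B)$. Swapping the tuples $P$ and $V$, and using $\|\sum P_i\otimes V_i\|=\|\sum V_i\otimes P_i\|$ (conjugation by the flip unitary), shows that the supremum for $X$ equals the supremum for $X^*$, since $(X^*)^*=X$.

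The step needing the most care is this duality. In particular, one must check that the supremum in Paulsen's formula is indeed symmetric, so that $\alpha(E_{r,s})=\alpha(E_{r,s}^*)$ holds without extra constants. Once that is in place, $\alpha(\mathcal S)=\alpha(E_{r,s})=\alpha(E_{r,s}^*)>1$ by Theorem~\ref{thm:counterexample}, which proves the corollary.
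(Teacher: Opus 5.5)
Your proposal is correct and follows the paper's proof. You reduce to $E_{r,s}$ (the paper simply says ``without loss of generality''), take $\alpha(E_{r,s}^*)>1$ from Theorem~\ref{thm:counterexample}, and transfer this to $E_{r,s}$ by Paulsen's duality $\alpha(X)=\alpha(X^*)$, which the paper cites as \cite[Proposition 2.4]{vip}. Both of your justifications of that duality, via $\mathrm{MIN}(X)^*=\mathrm{MAX}(X^*)$ or via the flip symmetry of Paulsen's formula, are sound; they fill in what the paper takes from the reference.
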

\begin{proof} Assume without loss of generality that $\mathcal S$ is $E_{r,s}$ with
$0<r,s\leq1$, $r+s>1$.
Our discussion above gives a contractive but not completely contractive map on $\operatorname{MIN}(E_{r,s}^*)$, so $\alpha(E_{r,s}^*)>1$.
Since $\alpha(E_{r,s}) = \alpha(E_{r,s}^*)$, see \cite[Proposition 2.4]{vip}, it follows that 
\[
\alpha(\mathcal S)=\alpha(E_{r,s})
=\alpha(E_{r,s}^*)>1.
\qedhere\]
\end{proof}
\begin{rem}\label{rem:oss} Every Operator space structure of a normed linear space $X$ lies between MIN and MAX, and $\alpha(X)=\|\mathrm{id}:\mathrm{MIN}(X)\to\mathrm{MAX}(X)\|_{cb}$. 
Paulsen observes that $\alpha(X)=1$ precisely when the identity is a complete isometry from $\mathrm{MIN}(X)$ to $\mathrm{MAX}(X)$, equivalently when $X$ carries a unique operator space structure \cite[p.~190]{Vern}.
\end{rem} 
We may now restate Corollary \ref{cor:main} as follows.
\begin{thm} \label{cor:oss}
    A two-dimensional complex subspace of $\ell_\infty^3$ carries a unique operator space structure if and only if it is isometric to $\ell_\infty^2$.
\end{thm}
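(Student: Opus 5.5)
The plan is to assemble the two directions from results already in hand, using Remark~\ref{rem:oss} to translate ``unique operator space structure'' into the numerical statement $\alpha(\mathcal S)=1$. Since $\alpha$ is defined in \eqref{eq:alpha} intrinsically in terms of $\mathrm{MIN}$ and $\mathrm{MAX}$ of the Banach space, it is an isometric invariant. So for a two-dimensional subspace $\mathcal S\subseteq\ell_\infty^3$ it suffices to show that $\alpha(\mathcal S)=1$ if and only if $\mathcal S$ is isometric to $\ell_\infty^2$.

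For the ``if'' direction, suppose $\mathcal S\cong\ell_\infty^2$. Then $\alpha(\mathcal S)=\alpha(\ell_\infty^2)$, and I invoke And\^o's theorem exactly as in Remark~\ref{rem:notequal}. Every contractive linear map out of $\mathrm{MIN}((\ell_\infty^2)^*)$ is completely contractive, which gives $\alpha((\ell_\infty^2)^*)=1$ via Paulsen's formula (Proposition~2.3 of \cite{vip}). Hence $\alpha(\ell_\infty^2)=1$ by the duality $\alpha(X)=\alpha(X^*)$ of \cite[Proposition 2.4]{vip}. The identity $\mathrm{MIN}(\mathcal S)\to\mathrm{MAX}(\mathcal S)$ is then a complete isometry, so by Remark~\ref{rem:oss} the operator space structure on $\mathcal S$ is unique.

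For the ``only if'' direction, argue by contraposition. If $\mathcal S$ is not isometric to $\ell_\infty^2$, Corollary~\ref{cor:main} gives $\alpha(\mathcal S)>1$. Corollary~\ref{cor:main} in turn rests on the reduction lemma of Section~2 to some $E_{r,s}$ with $0<r,s\le1$, $r+s>1$, on Theorem~\ref{thm:counterexample}, and on duality. Therefore $\mathrm{id}:\mathrm{MIN}(\mathcal S)\to\mathrm{MAX}(\mathcal S)$ is not a complete isometry, so $\mathrm{MIN}(\mathcal S)\neq\mathrm{MAX}(\mathcal S)$. These are two distinct operator space structures on $\mathcal S$, and uniqueness fails.

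There is no real obstacle here, since the substance lies in Theorem~\ref{mainthm} and Lemma~\ref{existnonnormalext}. The only point needing care is the passage between ``unique structure'' and ``$\alpha=1$''. If $\alpha(\mathcal S)=1$, then the cb-norm of the identity is $1$ and its inverse $\mathrm{MAX}\to\mathrm{MIN}$ is always completely contractive, so the identity is a complete isometry. Every structure lies between $\mathrm{MIN}$ and $\mathrm{MAX}$, so all structures then coincide. I will cite \cite[p.~190]{Vern} for this, as Remark~\ref{rem:oss} does.
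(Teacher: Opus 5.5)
Your proposal is correct and follows the paper's route. The paper obtains Theorem~\ref{cor:oss} by restating Corollary~\ref{cor:main} through Remark~\ref{rem:oss}, with the ``if'' half supplied by And\^o's theorem (giving $\alpha(\ell_\infty^2)=\alpha(\ell_1^2)=1$ as in Remark~\ref{rem:notequal}); your added justifications of why $\alpha=1$ is equivalent to uniqueness of the structure and why $\alpha$ is an isometric invariant are accurate and fill in what the paper leaves implicit.
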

\begin{rem} Let $\mathcal{Q}$ be a two-dimensional complex quotient of $\ell_\infty^3$. If $\mathcal{Q}$ is not isometric to $\ell_\infty^2$, then $\alpha(\mathcal Q)>1$.

Since $\alpha\ge 1$ always, it suffices to prove the contrapositive: if
$\alpha(\mathcal{Q})=1$, then $\mathcal{Q}$ is isometric to $\ell_\infty^2$.
Suppose that $\alpha(\mathcal{Q})=1$. Then, in particular, $\mathcal{Q}$ has Property P. Therefore, as noted in \cite[Fact 1]{BM},  $\mathcal{Q}^*$ also has Property P, or equivalently, the $2$- summing property by \cite[Fact 2]{BM}. On the other hand, $\mathcal{Q}^*$ embeds isometrically into $\ell_1^3$, being the dual of a quotient of $\ell_\infty^3$. A two-dimensional subspace of $\ell_1^3$ with the $2$-summing property is isometric to $\ell_1^2$ (Proposition 5.7 together with Proposition 2.1(c) of \cite{AFJS95}), so $\mathcal{Q}^*$ is isometric to $\ell_1^2$ and hence $\mathcal{Q}$ is isometric to $\ell_\infty^2$. 
\end{rem}

\subsubsection*{Acknowledgement} 
The first-named author is supported from ANRF research grant numbered ANRF/ARG/2025/001228/MS.
The second-named author gratefully acknowledges partial financial support from the DST, Govt. of India, through the INSPIRE Faculty Fellowship (Grant No. DST/INSPIRE/04/2022/001207), and ANRF through the Prime Minister Early Career Research Grant (Grant No. ANRF/ECRG/2025/000717/PMS). The fourth-named author acknowledges support from the Prime Minister Early Career Research Grant ANRF/ECRG/2024/000699/PMS and the ANRF-MATRICS grant ANRF/ARGM/2025/000895/MTR.

\subsubsection*{Declaration of use of generative AI} Gadadhar Misra acknowledges the use of ChatGPT (OpenAI) as an interactive aid in exploring counterexamples, clarifying Paulsen’s factorisation theorem (which is not used in the proof), developing the dilation argument, and refining the exposition. The authors take full responsibility for the results and the final manuscript.

\end{document}